\newcommand{\authorfootnotes}{\renewcommand\thefootnote{\@fnsymbol\c@footnote}}%
\newtheorem{thm}{Theorem}[section]
\newtheorem{lemma}[thm]{Lemma}
\newtheorem{proposition}[thm]{Proposition}
\newtheorem{corollary}[thm]{Corollary}
\newtheorem{remark}[thm]{Remark}
\numberwithin{equation}{section}
\newcommand{\norm}[1]{\ensuremath{N(\ideal{#1})}}
\begin{document}

\def\q{\mathfrak{q}}
\def\p{\mathfrak{p}}
\def\l{\mathfrak{l}}
\def\u{\mathfrak{u}}
\def\a{\mathfrak{a}}
\def\b{\mathfrak{b}}
\def\m{\mathfrak{m}}
\def\n{\mathfrak{n}}
\def\r{\mathfrak{r}}
\def\c{\mathfrak{c}}
\def\d{\mathfrak{d}}
\def\e{\mathfrak{e}}
\def\k{\mathfrak{k}}
\def\z{\mathfrak{z}}
\def\h{{\mathfrak h}}
\def\gl{\mathfrak{gl}}
\def\sl{\mathfrak{sl}}

\def\Ext{{\rm Ext}}
\def\Hom{{\rm Hom}}
\def\Ind{{\rm Ind}}

\def\res{\mathop{Res}}

\def\GL{{\rm GL}}
\def\SL{{\rm SL}}
\def\SO{{\rm SO}}
\def\O{{\rm O}}

\def\R{\mathbb{R}}
\def\C{\mathbb{C}}
\def\Z{\mathbb{Z}}
\def\N{\mathbb{N}}
\def\Q{\mathbb{Q}}
\def\A{\mathbb{A}}
\def\D{\mathbb{D}}
\def\Re{\text{Re}}
\def\Im{\text{Im}}

\def\w{\wedge}

\def\Cat{\mathcal{C}}
\def\HC{{\rm HC}}
\def\HCat{\Cat^\HC}
\def\proj{{\rm proj}}

\def\to{\rightarrow}
\def\To{\longrightarrow}

\def\1{1\!\!1}
\def\dim{{\rm dim}}

\def\th{^{\rm th}}
\def\isom{\approx}

\def\CE{\mathcal{C}\mathcal{E}}
\def\E{\mathcal{E}}

%%%%% def for Hilbert Modular Form %%
\def\dis{\displaystyle}
\def\f{{\bf f}}                 % Hilbert modular form
\def\g{{\bf g}}
\def\T{{\rm T}}              % Hecke operator
\def\omegatil{\tilde{\omega}}  % Hecke charac.
\def\H{\mathcal{H}}            
\def\Dif{\mathfrak{D}}      % Different
\def\W{W^{\circ}}           % normalized new vector in whittaker model
\def\Whit{\mathcal{W}}      % Whittaker model
\def\ringO{\mathcal{O}}     % ring of integers
\def\S{\mathcal{S}}      % space of cusp forms
\def\M{\mathcal{M}}      % space of modular forms
\def\K{{\rm K}}          % open compact subgroup
\def\h{\mathfrak{h}} 
\def\norm{{\rm N}}       % norm
\def\trace{{\rm Tr}} % trace
\def\ctilde{\tilde{C}}

\title{Large Sums of Fourier Coefficients of Cusp Forms}

\author{Claire Frechette}
\address[Claire Frechette]{Boston College, Department of Mathematics, Chestnut Hill, MA 02467, USA}
\email{frechecl@bc.edu}

\author{Mathilde Gerbelli-Gauthier}
\address[Mathilde Gerbelli-Gauthier]{McGill University, Department of Mathematics, Montr\'eal, QC H3A 2K6, Canada}
\email{mathilde.gerbelli-gauthier@mcgill.ca}

\author{Alia Hamieh}
\address[Alia Hamieh]{University of Northern British Columbia, Department of Mathematics and Statistics, Prince George, BC V2N 4Z9, Canada}
\email{alia.hamieh@unbc.ca}

\author{Naomi Tanabe}
\address[Naomi Tanabe]{Bowdoin College,  Department of Mathematics, Brunswick, ME 04011, USA}
\email{ntanabe@bowdoin.edu}

\keywords{ Modular forms, Sums of Fourier coefficients, zeros of $L$-functions, mean values of multiplicative functions. }

\subjclass[2010]{Primary 11F30; secondary 11F11, 11F12, 11M41}
\date{\today}
\thanks{The research of Claire Frechette is supported by NSF grant DMS-2203042, and the research of Alia Hamieh is  supported by NSERC Discovery grant RGPIN-2018-06313.}

\begin{abstract}
Let $N$ be a fixed positive integer, and let $f\in S_k(N)$ be a primitive cusp form given by the Fourier expansion
$f(z)=\sum_{n=1}^{\infty} \lambda_f(n)n^{\frac{k-1}{2}}e(nz)$. We consider the partial sum $S(x,f)=\sum_{n\leq x}\lambda_f(x)$. It is conjectured that $S(x,f)=o(x\log x)$ in the range $x\geq k^{\epsilon}$. Lamzouri proved in \cite{Lamzouri} that this is true under the assumption of the Generalized Riemann Hypothesis (GRH) for $L(s,f)$. In this paper, we prove that this conjecture holds under a weaker assumption than GRH.  In particular, we prove that given $\epsilon>(\log k)^{-\frac{1}{8}}$ and $1\leq T\leq (\log k)^{\frac{1}{200}}$, we have $S(x,f)\ll \frac{x\log x}{T}$ in the range $x\geq k^{\epsilon}$  provided that $L(s,f)$ has no more than $\epsilon^2\log k/5000$ zeros in the region $\left\{s\,:\, \Re(s)\geq \frac34, \, |\Im(s)-\phi| \leq \frac14\right\}$ for every real number $\phi$ with $|\phi|\leq T$.  

 \end{abstract}

\maketitle

\section{Introduction}

Let $f\in S_k(N)$ be a primitive cusp form given by the Fourier expansion
\[f(z)=\sum_{n=1}^{\infty} \lambda_f(n)n^{\frac{k-1}{2}}e(nz).\] 
A standard argument shows that $\lambda_f(n)\ll n^{\frac12}$ for all $n\geq1$. One of the deepest theorems in the theory of modular forms is the upper bound \begin{equation*}\label{deligne}
|\lambda_f(n)| \leq \tau(n)
\end{equation*} 
for any $n\geq1$, where $\tau(n)$ is the number of positive divisors of $n$. This bound was established by Deligne \cite{Deligne} in 1974, thus settling Ramanujan's conjecture for classical modular forms.

In order to better understand the asymptotic behaviour of these coefficients, a common approach is to study them on average. A standard bound in this direction was established by Hecke \cite{hecke} in 1927 when he proved that 
\[S(x,f)\vcentcolon=\sum_{n\leq x}\lambda_f(n)\ll_{f} x^{\frac{1}{2}}\] 
for $x$ large enough. Subsequent work of Walfisz \cite{walfisz} combined with Deligne's bound yields the bound \begin{equation}\label{eq:walfisz}S(x,f)\ll_{f} x^{\frac{1}{3}+\epsilon}\end{equation} for any $\epsilon>0$. Hafner and Ivic \cite{Hafner-Ivic} improved upon this result by removing the factor $x^{\epsilon}$ from \eqref{eq:walfisz}. The implicit constants in these estimates
depend on the modular form $f$. In many applications, one seeks asymptotic estimates for $S(x,f)$ that are uniform in the level aspect or the weight aspect of the underlying modular form. If $f$ is a primitive cusp form of weight $k$ and a fixed level $N$, one could use Perron's formula and the convexity bound for $L(s,f)$ to prove that \[S(x,f)\ll (xk)^{\frac12+\epsilon},\] as $x,k\to\infty$. This implies that \begin{equation}\label{eq:convexity}S(x,f)=o(x\log x)\end{equation} in the range $x>k^{1+\epsilon}$. In fact, using subconvexity bounds for $L(s,f)$, one sees that \eqref{eq:convexity} is valid in the wider range $x>k^{1-\delta}$ for some $\delta>0$.
For  a primitive cusp form $f$ in $S_{k}(1)$, Lamzouri \cite[Corollary~1.2]{Lamzouri} proved that \eqref{eq:convexity} holds in the range $\log x/\log\log k\to\infty$ assuming the GRH for $L(s,f)$. He also proved unconditionally that this range in $x$ is best possible \cite[Corollary 1.4]{Lamzouri}. Lamzouri's work is a $\mathrm{GL}(2)$ analogue of the work of Granville and Soundararajan  \cite{GS1}  on large character sums in which they proved that, for  a primitive character $\chi\mod q$, we have 
$\sum_{n\leq x}\chi(n)=o(x),$ as $\log x/\log\log q\to\infty$ assuming the GRH for $L(s,\chi)$.  In \cite{GS3}, they showed that this asymptotic holds under the weaker assumption that ``100\%" of the zeros of $L(s,f)$ up to height $\frac14$
lie on the critical line. To achieve this goal, Granville and Soundararajan established concrete connections between large character sums and zeros of $L(s,\chi)$. 
The work in this paper is  motivated by the aforementioned papers of Granville and Soundararajan. In fact, our main results stated below are $\mathrm{GL}(2)$ analogues of \cite[Theorem~1.3]{GS3} and \cite[Corollary~1.2]{GS3}.
\begin{thm}\label{thm:main}
	Let $f\in S_k(N)$ be a primitive cusp form, and let $\exp(\sqrt{\log k})\leq x \leq \sqrt{k}$ be such that $|S(x, f)|=\frac{x\log x}{Q}$ where $1\leq Q\leq (\log x)^{1/100}$. Then there exists an absolute constant $C>0$ such that for some real number $\phi$ with $|\phi|\leq CQ$ and any parameter $100CQ^3\leq L\leq 40 \log x$, the region
	%$cN/(\log x)^{1/2}\leq \gamma\leq 2/5$, the region
	\begin{equation}\label{region1}\left\{s\,:\, |s-(1+i\phi)|<\frac{L\log(k(\log k)^{1/\gamma})}{(\log x)^2}\right\},\end{equation}
	with $\gamma = L/100\log x$, contains at least $L/625$ zeroes of $L(s, f)$. 
\end{thm}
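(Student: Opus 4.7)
The strategy is to emulate the approach Granville--Soundararajan employ in \cite[Theorem~1.3]{GS3} for character sums, transported to the $\mathrm{GL}(2)$ setting. The starting point is a smoothed version of Perron's formula: introduce a compactly supported smooth bump $\Psi$ on $[1/2,2]$ whose Mellin transform $\hat\Psi(s)$ decays rapidly in vertical strips, and write
\[\sum_{n} \lambda_f(n)\Psi(n/y) \;=\; \frac{1}{2\pi i}\int_{(\sigma)} L(s,f)\,\hat\Psi(s)\, y^{s}\,ds\]
for $\sigma>1$. A dyadic partition-of-unity argument shows that the hypothesis $|S(x,f)|=x\log x/Q$ forces one such smoothed sum, at some $y\asymp x$, to be of comparable size $\gg x\log x/Q$. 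The purpose of this packaging is to turn $S(x,f)$ into an integral of $L(s,f)$ against a well-localized weight, making the subsequent contour manipulations legitimate without edge effects.

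The second step is to shift the contour into the critical strip, say to $\Re(s)=1-c/\log x$, and truncate at height $T_{0}\asymp Q$, absorbing the tails via the rapid decay of $\hat\Psi$. The surviving portion of the integral, combined with the lower bound on the smoothed sum, forces $|L(s,f)|$ to be anomalously large somewhere on the truncated contour: if $|L(\sigma+it,f)|$ were uniformly small for all $|t|\leq T_{0}$, one would deduce $|T(y,f)|\ll x\log x/Q$, contradicting the hypothesis. A pigeonhole/averaging argument over $|t|\leq T_{0}$ then produces a real $\phi$ with $|\phi|\leq CQ$ such that $L(s,f)$ is comparatively large on a short vertical arc close to $1+i\phi$.

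The third and decisive step converts ``$L(s,f)$ large near $1+i\phi$'' into zeros inside the disk \eqref{region1}. For this, apply Jensen's formula to $L(s,f)$ on the disk centered at $1+i\phi$ of radius $r=L\log(k(\log k)^{1/\gamma})/(\log x)^{2}$ with $\gamma=L/100\log x$. The upper bound on the boundary is supplied by the convexity bound for $L(s,f)$, which gives $\log|L(s,f)|=O(\log(k(1+|t|)))$, while the required inner estimate comes from the lower bound produced in the previous step. The ratio dictated by Jensen's inequality is exactly the count of zeros inside the disk; a careful bookkeeping of constants, chosen so that the outer logarithmic radius balances the inner lower bound, yields the advertised $L/625$ zeros. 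The flexibility in the parameter $L$ (ranging between $100CQ^{3}$ and $40\log x$) reflects the trade-off between how large a disk one uses and how strong an upper bound on $|L|$ one needs on its boundary; the specific choice $\gamma=L/100\log x$ is what makes these two quantities balance.

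The main obstacle lies in step three: securing a Jensen-type inequality with constants sharp enough to yield $L/625$ zeros, rather than a qualitative ``many zeros'' statement. In particular, the lower bound on $|L(s,f)|$ on the small arc near $1+i\phi$ produced by step two must be of the right order of magnitude in $Q$ and $\log x$, and the convexity bound used on the boundary must be essentially tight up to lower-order factors. A secondary technical difficulty is uniform control in the weight $k$: the archimedean factors in $\Lambda(s,f)$ contribute non-trivially under the contour shift and must be handled to ensure that all $k$-dependencies are polynomial in $\log k$ rather than in $k^{\epsilon}$, as the conclusion is genuinely sensitive to the precise size of $\log(k(\log k)^{1/\gamma})$ appearing in the radius.
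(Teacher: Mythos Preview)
Your Step~3 has the direction of Jensen's formula backwards. With a \emph{lower} bound on $|L|$ at the center $1+i\phi$ and an \emph{upper} (convexity) bound on the boundary circle, Jensen yields an \emph{upper} bound on the zero count in the disk, not a lower bound: in the identity $\sum_{\rho\in D}\log(R/|\rho-(1+i\phi)|)=\frac{1}{2\pi}\int\log|L|\,d\theta-\log|L(1+i\phi,f)|$ the right-hand side is small (or negative) in exactly that configuration. What the argument actually needs is the converse implication, that a large value of $|L|$ at a point $1-\gamma+i(\phi+\xi)$ \emph{inside} the strip forces many zeros nearby. The paper does not use Jensen at all; it extracts this from the Hadamard product, proving in Lemma~\ref{lem:L-upperbound} that $|L(1-\gamma+it,f)|\ll \gamma^{-2}\exp\bigl(\sum_\rho 2\gamma^2/|1+\gamma+it-\rho|^2\bigr)$. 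A large left side then forces $\sum_\rho \gamma/|1+\gamma+i(\phi+\xi)-\rho|^2\geq y_0/4$ (Proposition~\ref{prop}), and a further explicit-formula estimate bounds the contribution of zeros with $|1+i\phi-\rho|>100Y^2$, leaving at least $L/625$ zeros in the stated disk.

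Steps~1--2 also diverge from the paper's route and are too schematic to assess. The paper never performs a smoothed Perron contour shift; instead $\phi$ is taken to be the point in $|t|\leq(\log x)^2$ maximizing $|L(1+1/\log x+it,f)|$, and the bound $|\phi|\ll Q$ follows from Hal\'asz's theorem combined with a Lipschitz estimate (Section~\ref{sec:pretentious}). The passage from large $|S(x,f)|$ to large $|L|$ inside the strip goes through a Plancherel identity (Lemma~\ref{lem:L-average}) applied to $y\mapsto e^{-y}S(e^y,f,\phi)$, whose near-constancy in $y$ is guaranteed by the Lipschitz bound (Lemma~\ref{lem:estimate average}). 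Your pigeonhole substitute would have to deliver both the sharp localisation $|\phi|\leq CQ$ and a lower bound for $|L(1-\gamma+i(\phi+\xi),f)|$ of the precise shape $\gg (y_0/Q)\sqrt{y_0/\gamma}\,e^{\gamma y_0/2}$ needed in Proposition~\ref{prop}; it is not clear that a bare contour argument produces either without the pretentious machinery.
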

The proof of Theorem~\ref{thm:main} is given  in Section~\ref{sec:proof}. Furthermore, we can derive the following corollary from this theorem.
\begin{corollary} \label{cor:main} Let $f\in S_k(N)$ be a primitive cusp form. Let $\epsilon$ and $T$ be real numbers with $\epsilon\geq (\log k)^{-1/8}$ and $1\leq T\leq (\log k)^{1/200}$. Suppose that for every real $\phi$ with $|\phi|\leq T$ the region
\begin{equation}\label{region2} 
\left\{s\,:\, \Re(s)\geq \frac34, \, |\Im(s)-\phi| \leq \frac14\right\} 
\end{equation}
contains no more than $\epsilon^2\log k/5000$ zeroes of $L(s, f)$. Then for all $x\geq k^\epsilon$, we have
\[\left| \sum_{n\leq x}\lambda_f(n)\right|\ll \frac{x\log x}{T}.\]
\end{corollary}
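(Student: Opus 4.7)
The approach is to prove the corollary by contrapositive of Theorem~\ref{thm:main}. Write $|S(x,f)| = x\log x/Q$; Deligne's bound guarantees $Q\geq 1-o(1)$. It suffices to produce an absolute constant $C$ such that $Q\geq T/C$ whenever $T\geq C$ (for $T\leq C$ the bound follows directly from Deligne's $|S(x,f)|\ll x\log x$). So suppose, for contradiction, that $Q < T/C_0$, where $C_0$ is the absolute constant of Theorem~\ref{thm:main}. The assumptions $T\leq (\log k)^{1/200}$ and $\epsilon\geq (\log k)^{-1/8}$ force $Q \leq (\log x)^{1/100}$ and $x\geq k^\epsilon\geq \exp((\log k)^{7/8})\geq \exp(\sqrt{\log k})$. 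Assuming for the moment that $x\leq\sqrt{k}$, Theorem~\ref{thm:main} yields a real $\phi$ with $|\phi|\leq C_0Q<T$ and, for every admissible $L\in [100C_0Q^3,\,40\log x]$, at least $L/625$ zeros of $L(s,f)$ in a disk around $1+i\phi$ of radius
\[
r_L \;=\; \frac{L\log k}{(\log x)^2} \;+\; \frac{100\log\log k}{\log x},
\]
obtained by unpacking $\log\bigl(k(\log k)^{1/\gamma}\bigr)$ with $\gamma = L/(100\log x)$.

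The crux is to choose $L$ so that the disk fits inside one of the rectangles \eqref{region2} while containing more than $\epsilon^2\log k/5000$ zeros. I would take $L$ slightly above $\epsilon^2\log k/8$: then $L/625 > \epsilon^2\log k/5000$, and using $\log x\geq\epsilon\log k\geq(\log k)^{7/8}$,
\[
r_L \;\leq\; \frac{\epsilon^2(\log k)^2}{8(\log x)^2}+\frac{100\log\log k}{\log x} \;\leq\; \frac18 + \frac{100\log\log k}{(\log k)^{7/8}} \;<\; \frac14
\]
for $k$ sufficiently large. Hence the disk lies in $\{s:\Re(s)>3/4,\,|\Im(s)-\phi|<1/4\}$, which is the rectangle \eqref{region2} indexed by $\phi$, producing strictly more zeros than permitted and contradicting the standing hypothesis. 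The admissibility constraints $100C_0Q^3\leq L\leq 40\log x$ are routine: $Q^3\leq (\log k)^{3/200}\ll (\log k)^{3/4}\asymp L$, and $L\leq \log k/8\leq 40\epsilon\log k\leq 40\log x$ since $\epsilon\leq 1$.

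The complementary range $x>\sqrt{k}$ lies outside the scope of Theorem~\ref{thm:main} and I would dispatch it by a direct Perron-formula argument: shifting the contour from $\Re(s)=1+1/\log x$ to $\Re(s)=3/4$ crosses only the zeros permitted by the hypothesis in \eqref{region2}, and the shifted integral is controlled by convexity (or a mild subconvexity bound) for $L(s,f)$, yielding the same bound $|S(x,f)|\ll x\log x/T$ in this easier regime.

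The principal obstacle is the simultaneous saturation of two opposing constraints in the choice of $L$: enlarging $L$ forces more zeros but also widens the disk, and the disk must remain inside a rectangle of height $1/2$. The numerical constants in Theorem~\ref{thm:main} (in particular the ratio $625:5000$ and the $(\log x)^2$ in the denominator of $r_L$) are tuned so that the threshold $L\sim \epsilon^2\log k/8$ just barely satisfies both demands, and the hypothesis $\epsilon\geq(\log k)^{-1/8}$ is exactly what renders the secondary term $100\log\log k/\log x$ negligible. Beyond this calibration, the argument is essentially an exercise in careful constant management.
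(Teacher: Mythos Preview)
Your core argument is exactly the paper's: contrapositive of Theorem~\ref{thm:main} with the choice $L=\epsilon^2\log k/8$, then check that the disk \eqref{region1} sits inside the rectangle \eqref{region2} while carrying at least $L/625>\epsilon^2\log k/5000$ zeros. The paper's proof is in fact a two-line version of yours that skips the hypothesis verification you carry out. One small slip in your bookkeeping: the chain ``$L\leq\log k/8\leq 40\epsilon\log k$'' requires $\epsilon\geq1/320$, which is not assumed; the intended inequality is simply $L=\epsilon^2\log k/8\leq(\epsilon\log k)/8\leq(\log x)/8$.

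Your treatment of the range $x>\sqrt k$, however, has a genuine gap. The Perron argument you sketch does not work: shifting the contour to $\Re(s)=\tfrac34$ picks up residues from every zero with $\Re(\rho)\geq\tfrac34$ and $|\Im(\rho)|$ up to the truncation height $H$, and $H$ must be at least a power of $x$ for the truncation error to be acceptable. The hypothesis \eqref{region2} only constrains zeros at height $\leq T+\tfrac14\leq(\log k)^{1/200}+\tfrac14$, so residues from higher zeros are completely uncontrolled, and the shifted integral on $\Re(s)=\tfrac34$ loses a factor $k^{1/4}$ from convexity rather than gaining anything. The paper's own proof simply does not address this range. An honest fix is to note that the upper restriction $x\leq\sqrt k$ in Theorem~\ref{thm:main} is used only in the estimate leading to \eqref{eqn:upperbd}, which actually requires merely $\log x\leq A\log k$ for some absolute $A$; beyond $x=k^A$ the unconditional bound $S(x,f)\ll(xk)^{1/2+o(1)}$ already beats $x\log x/T$.
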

\begin{proof}
We choose $L=\epsilon^2\log k/8$ in Theorem \ref{thm:main}. For any $x\geq k^\epsilon$, we observe that the region given in \eqref{region1} is contained in the region given in \eqref{region2} since
\[ \frac{L\log(k(\log k)^{1/\gamma})}{(\log x)^2}\leq \frac{2L\log k}{(\log x)^2}=\frac28\left(\frac{\epsilon \log k}{\log x}\right)^2 \leq \frac14.\]
 \end{proof}

\begin{remark}
Let $N(T,f)$ be the number of zeros $\rho=\beta+it$ of $L(s,f)$ such that $0\leq \beta\leq 1$ and $|t|\leq T$. By \cite[Theorem~5.38]{IwaniecKowalski}, we have \[N(T,f)=\frac{T}{\pi}\log\frac{NT^2}{(2\pi e)^2}+O\left(\log\left(N(T+k)\right)\right),\] for $T\geq 2$.
Hence, $L(s,f)$ has $O(\log k)$ zeros in the critical strip up to height $(\log k)^{\frac{1}{200}}$. Corollary \ref{cor:main} implies that if \eqref{eq:convexity} is false for $x=k^{\epsilon}$, then a positive proportion $\gg \epsilon^2$ of these zeros lie off the critical line.  
\end{remark}

In this paper we adapt the strategy of proof employed in \cite{GS3}. The main idea can be found in Proposition \ref{prop} where we prove that the inequality $S(e^y,f)\geq e^yy^{1-\frac{1}{100}}$ yields a lower bound for a certain sum taken over non-trivial zeros of $L(s,f)$. This is accomplished by first employing Lemma \ref{lem:estimate average} which yields a relation between $S(e^y,f)$ and $S(e^y,f,\phi)=\sum_{n\leq e^y}\lambda_f(n)n^{-i\phi}$ for some real number $\phi$ via various applications of results from the theory of mean values of multiplicative functions such as Corollary \ref{cor:lipschitz} (Lipschitz Theorem) and Proposition \ref{mvfbound} (Hal\'asz's Theorem). Then we use Lemma \ref{lem:L-average} which is an application of Plancherel's formula relating an integral expression involving $L(1-\gamma+i(\phi+\xi),f)$ (for some $0<\gamma\leq\frac12$) as $\xi$ varies in $\mathbb{R}$ with an integral expression involving the twisted partial sums $S(e^{y},f,\phi)$ as $y$ varies in $\mathbb{R}$. To tie these relations together and establish the conclusion of Proposition \ref{prop}, we resort to Lemma \ref{lem:L-upperbound} which uses the classical explicit formula for $L(s,f)$ to furnish an upper bound for $L(1-\gamma+i(\phi+\xi),f)$ in terms of a sum taken over the non-trivial zeros of $L(s,f)$.

The paper is structured as follows: Section~\ref{sec:preliminaries} provides some analytic tools and preliminaries. In Section~\ref{sec:pretentious}, we delve into some key estimates regarding mean values of divisor-bounded multiplicative functions. These estimates are used in Section~\ref{sec:lemmas} to establish a couple of lemmas, which are crucial to proving Proposition~\ref{prop}. Finally, the proof of the main theorem is presented in Section~\ref{sec:proof}. %, where we rigorously establish its validity and demonstrate its implications by utilizing crucial lemmas proved in Section~\ref{sec:lemmas}.

\vskip .1in
\noindent {\bf{Conventions and Notation.}} In this work, we adopt the following conventions and notation. Given two functions $f(x)$ and $g(x)$ we write $f(x) \ll g(x)$, $g(x) \gg f(x)$ or $f(x) = O(g(x))$ to mean there exists some positive constant $M$ such that $|f(x)| \leq M |g(x)|$ for $x$ large enough. The notation $f(x) \asymp g(x)$ is used when both estimates $f(x) \ll g(x)$ and $f(x) \gg g(x)$ hold simultaneously. %We write $f(x)\sim g(x)$ if $\displaystyle{\lim_{x\to\infty}\frac{f(x)}{g(x)}=1}$. 
We write $f(x) = o(g(x))$ when $g(x) \neq 0$ for
sufficiently large $x$ and $\displaystyle{\lim_{x\to\infty} \frac{f(x)}{g(x)} = 0}$.  The letter $p$ will be exclusively used to represent a prime number. 

\section{Analytic Tools and Preliminaries}\label{sec:preliminaries}
Throughout this paper, the set of all primitive cusp forms in $S_k(N)$ is denoted as $H_k(N)$. The $L$-function associated to $f\in H_k(N)$ is given by the Dirichlet series 
\[L(s,f)=\sum_{n=1}^{\infty}\frac{\lambda_f(n)}{n^s}\] which is absolutely convergent for $\operatorname{Re}(s)>1$. In this region, the $L$-function can be represented as the Euler product
\begin{align}\label{eqn:L-fn}
L(s,f) &= \prod_{p|N}\left(1-\lambda_f(p)p^{-s}\right)^{-1}\prod_{p \nmid N}\left(1-\lambda_f(p)p^{-s}+p^{-2s}\right)^{-1}\nonumber \\
&= \prod_{p}  \left(1-\frac{\alpha_{1, f}(p)}{p^s} \right)^{-1}\left(1-\frac{\alpha_{2, f}(p)}{p^s} \right)^{-1},
\end{align}
where $\alpha_{1, f}(p)$ and $\alpha_{2, f}(p)$ are referred to as the $p$-th local parameters of $f$. Since the Ramanujan conjecture for classical modular forms is known, thanks to Deligne's work, we have $|\alpha_{j, f}(p)|=1$ for all $p\nmid N$. %Hence, if $p \nmid N$, we write $\alpha_{1, f}(p) = e^{i\theta_f(p)}$ and $\alpha_{2, f}(p) = e^{-i\theta_f(p)}$ for $\theta_f(p)\in[0,\pi]$.

The completed $L$-function which we denote by $\Lambda(s, f)$ (see below) can be analytically continued to an entire function of order 1 and  satisfies a functional equation that relates its value at $s$ to its value at $1-s$.   In particular, we have  the Hadamard factorization \begin{align} \label{Hadamardprod} 
  \Lambda(s, f) \vcentcolon &=N^{s/2} \cdot 2^{(3-k)/2}\sqrt{\pi} (2\pi)^{-s}  \Gamma \left (s + \frac{k-1}{2} \right) L(s,f) \nonumber \\
	&=  e^{A+Bs}\prod_{\rho} \left(1-\frac{s}{\rho}\right) e^{s/\rho},  
 \end{align}
for some $A,B\in \C$, where $B$ satisfies the property $\Re(B) = \sum_\rho - \Re\left(\frac{1}{\rho}\right)$. See  \cite[Equations~5.4,\;5.23,\; and\;5.86]{IwaniecKowalski} for the details. 

To conclude this section, we prove a useful lemma that will be instrumental in the upcoming sections.
\begin{lemma}\label{lem:L-upperbound}
Suppose $\gamma$ is a real number such that $0< \gamma \leq \frac{1}{2}$ and $t$ is any real number. Then
\begin{equation*}\label{eqn:L-upperbound}
\left|L ( 1-\gamma + it,f) \right| \ll \frac{1}{\gamma^2} \emph{exp}\left(\sum_\rho \frac{2\gamma^2}{|1+\gamma + it - \rho|^2} \right).
\end{equation*}
\end{lemma}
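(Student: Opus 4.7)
The plan is to compare $L(1-\gamma+it,f)$ with its counterpart at the reflected point $s_2:=1+\gamma+it$; since $\Re(s_2)>1$, the Euler product gives $|L(s_2,f)|\leq \zeta(1+\gamma)^2 \ll \gamma^{-2}$, which accounts for the $1/\gamma^2$ factor in the claim. Writing $s_1:=1-\gamma+it$, I would take the ratio of \eqref{Hadamardprod} at $s_1$ and $s_2$. Because $s_1-s_2=-2\gamma$ is real and $\Re B = -\sum_\rho \Re(1/\rho)$, the exponential prefactors $e^{-2\gamma B}$ and $\prod_\rho e^{-2\gamma/\rho}$ combine to absolute value $1$, yielding the clean identity
\begin{equation*}
\frac{|L(s_1,f)|}{|L(s_2,f)|} = N^\gamma (2\pi)^{-2\gamma}\left|\frac{\Gamma(s_2+(k-1)/2)}{\Gamma(s_1+(k-1)/2)}\right|\prod_\rho \left|\frac{\rho-s_1}{\rho-s_2}\right|.
\end{equation*}

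To estimate the product over zeros, a short calculation gives $|\rho-s_1|^2 = |\rho-s_2|^2 + 4\gamma(\beta-1)$; since $\beta\leq 1$ this makes the ratio at most $1$, and the inequality $\log(1+x)\leq x$ bounds
\begin{equation*}
\log\prod_\rho\frac{|\rho-s_1|}{|\rho-s_2|} \leq \sum_\rho \frac{2\gamma(\beta-1)}{|\rho-s_2|^2}.
\end{equation*}
The key algebraic manipulation is the identity $2\gamma(\beta-1) = -2\gamma(1+\gamma-\beta)+2\gamma^2$, which combined with $\Re(1/(s_2-\rho))=(1+\gamma-\beta)/|\rho-s_2|^2$ rewrites this upper bound as $-2\gamma\sum_\rho \Re(1/(s_2-\rho)) + 2\gamma^2\sum_\rho 1/|\rho-s_2|^2$.

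Next I apply the logarithmic derivative of \eqref{Hadamardprod} at $s_2$; after cancelling $\Re B$ with $\sum_\rho \Re(1/\rho)$, it gives
\begin{equation*}
\sum_\rho \Re\!\left(\frac{1}{s_2-\rho}\right)=\tfrac{1}{2}\log N - \log(2\pi) + \Re\frac{\Gamma'}{\Gamma}\!\left(s_2+\tfrac{k-1}{2}\right) + \Re\frac{L'}{L}(s_2,f).
\end{equation*}
Substituting this into the logarithm of the Hadamard identity for $|L(s_1,f)/L(s_2,f)|$, the contributions $\gamma\log N$, $-2\gamma\log(2\pi)$, and the leading Stirling term $2\gamma\Re\psi(s_2+(k-1)/2)$ coming from the gamma-ratio cancel pairwise, leaving $\log|L(s_1,f)/L(s_2,f)|\leq 2\gamma^2\sum_\rho 1/|\rho-s_2|^2 + O(1)$. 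Exponentiating and using $|L(s_2,f)|\ll \gamma^{-2}$ then completes the proof.

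The main obstacle will be the careful bookkeeping of these cancellations: one must verify that the Stirling remainder $\log|\Gamma(w+\gamma)/\Gamma(w-\gamma)| - 2\gamma\Re\psi(w+\gamma) = O(\gamma^2/|w|)$ (where $w = (k+1)/2 + it$, so $\Re(w\pm\gamma)\geq 1/2>0$), together with the residual term $2\gamma|\Re L'/L(s_2,f)| \ll 1$ (which follows from the Dirichlet-series estimate $|L'/L(s_2,f)|\ll 1/\gamma$, itself a consequence of Deligne's bound $|\lambda_f(n)|\leq \tau(n)$), are genuine absolute constants with no hidden $k$-, $N$-, or $t$-dependence. Once these error estimates are in place the rest of the argument is essentially algebraic manipulation.
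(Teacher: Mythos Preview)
Your proposal is correct and follows essentially the same route as the paper: compare $L$ at the reflected points $1\pm\gamma+it$ via the Hadamard factorization of $\Lambda$, rewrite each factor $|s_1-\rho|/|s_2-\rho|$ using the identity $|s_1-\rho|^2-|s_2-\rho|^2=4\gamma(\beta-1)$ and the inequality $1+x\leq e^x$, and then eliminate the $N$-, $k$-, and $t$-dependent pieces by feeding in the logarithmic derivative of \eqref{Hadamardprod} at $s_2$ together with the bound $|L'/L(s_2,f)|\ll 1/\gamma$ and $|L(s_2,f)|\leq \zeta(1+\gamma)^2\ll\gamma^{-2}$. The only cosmetic difference is that the paper applies Stirling to the Gamma ratio first, obtaining an explicit factor $(N(k^2+t^2))^\gamma$ which it then shows is cancelled by $\exp\bigl(-2\gamma\sum_\rho\Re(1/(s_2-\rho))\bigr)$ via \eqref{RelogL}, whereas you keep the Gamma ratio intact and cancel it directly against the $2\gamma\,\Re\psi(s_2+(k-1)/2)$ term produced by the logarithmic derivative; both organizations lead to the same $O(1)$ remainder.
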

\begin{proof}
Let $s_0 = 1 + \gamma + it$ and $s_1 = 1- \gamma + it$. We begin by considering the ratio of $L$-functions evaluated at $s_0$ and $s_1$ using the functional equation:
\begin{align*}
\left|\frac{L(s_1,f)}{L(s_0,f)} \right| %&= \left|\frac{\Lambda(f,s_1)}{\Lambda(f,s_0)}\cdot \frac{q^{s_0/2} \cdot (2\pi)^{-s_0}}{q^{s_1/2} \cdot (2\pi)^{-s_1}} \cdot \frac{\Gamma\left(s_0 + \frac{k-1}{2} \right)}{\Gamma\left(s_1 + \frac{k-1}{2} \right)} \right|\\
%&= \left|\frac{\Lambda(f,s_1)}{\Lambda(f,s_0)}\right|\cdot \left|q^{(s_0-s_1)/2} \cdot (2\pi)^{-s_0+s_1} \right|\cdot\left| \frac{\Gamma\left(s_0 + \frac{k-1}{2} \right)}{\Gamma\left(s_1 + \frac{k-1}{2} \right)} \right|\\
&= \left|\frac{\Lambda(s_1,f)}{\Lambda(s_0,f)}\right| \left(\frac{N}{4\pi^2}\right)^\gamma \left| \frac{\Gamma\left(s_0 + \frac{k-1}{2} \right)}{\Gamma\left(s_1 + \frac{k-1}{2} \right)} \right|.
\end{align*}
%Applying Sterling's formula $\Gamma(z) = \sqrt{2\pi/z}\left(z/e\right)^z\left(1 + O\left(\frac{1}{z}\right)\right)$,
%\begin{align*}
%\Gamma(z) = \sqrt{\frac{2\pi}{z}}\left(\frac{z}{e}\right)^z\left(1 + O\left(\frac{1}{z}\right)\right),
%\end{align*}
%to the ratio of $\Gamma$ functions, we obtain
%\begin{align*}
%\frac{\Gamma\left(s_0 + \frac{k-1}{2} \right)}{\Gamma\left(s_1 + \frac{k-1}{2} \right)} 
%&= e^{-2\gamma}\cdot \frac{\left(s_0 + \frac{k-1}{2} \right)^{s_0 -1 + \frac{k}{2}}}{\left(s_1 + \frac{k-1}{2}\right)^{s_1 -1 + \frac{k}{2}}}\cdot \frac{1+O(1/(s_0 + \frac{k-1}{2}))}{1+ O(1/(s_1 + \frac{k-1}{2})}
%= e^{-2\gamma} \left(\frac{\gamma+ it + \frac{k+1}{2}}{-\gamma+it + \frac{k+1}{2}}\right)^{it+\frac{k}{2}} \left(-\gamma^2 - %t^2 +it(k+1) + \left(\frac{k+1}{2}\right)^2\right)^\gamma \frac{1+O(1/(\gamma + it + \frac{k+1}{2}))}{1+ O(1/(-\gamma+ it + \frac{k+1}{2}))}.
%\end{align*}
%we see that asymptotically
%\begin{align*}
%\left| \frac{\Gamma\left(s_0 + \frac{k-1}{2} \right)}{\Gamma\left(s_1 + \frac{k-1}{2} \right)} \right|\asymp (k^2 + t^2)^\gamma.
%\end{align*}
An application of Stirling's formula, together with the Hadamard factorization \eqref{Hadamardprod}, yields %and the identity for $B$, 
%\begin{align*}
%\left|\frac{\Lambda(s_1,f)}{\Lambda(s_0,f)}\right| &= \left| e^{B(s_1-s_0)} \prod_\rho \frac{\rho-s_1}{\rho - s_0}\cdot  e^{(s_1 - s_0)/\rho}\right| = \prod_\rho \frac{|s_1 - \rho|}{|s_0- \rho|}.
%\end{align*}
%Therefore, 
\begin{align}\label{asympL}
\left|\frac{L(s_1,f)}{L(s_0,f)} \right| \asymp (N(k^2 + t^2))^\gamma\prod_\rho \frac{|s_1 - \rho|}{|s_0- \rho|}.
\end{align}
Rearranging, we get
\begin{align*}
\frac{|s_1 - \rho|}{|s_0- \rho|} &= \left( 1 - \frac{|s_0 - \rho|^2 - |s_1 - \rho|^2}{|s_0 - \rho|^2}\right)^{1/2} = \left( 1 - \frac{4\gamma\Re(1-\rho)}{|s_0 - \rho|^2}\right)^{1/2}
\intertext{which is a truncation of the Taylor expansion for the exponential function, so}
\frac{|s_1 - \rho|}{|s_0- \rho|}&\leq \exp\left(-\frac{2\gamma\Re(1-\rho)}{|s_0 - \rho|^2} \right)= \exp \left(-2\gamma\Re\left(\frac{1}{s_0 - \rho}\right) + \frac{2\gamma^2}{|s_0 - \rho|^2} \right).
\end{align*}
Substituting this into \eqref{asympL}, we have
\begin{align}\label{asympL2}
\left|\frac{L(s_1,f)}{L(s_0,f)} \right| \asymp (N(k^2 + t^2))^\gamma\prod_\rho \exp \left(-2\gamma\Re\left(\frac{1}{s_0 - \rho}\right) + \frac{2\gamma^2}{|s_0 - \rho|^2} \right).
\end{align}

On the other hand, by taking the logarithmic derivative of \eqref{Hadamardprod} and applying Stirling's formula,  %gives
%\begin{align*}
%\frac{L'(s,f)}{L(s,f)}&= B + \sum_\rho \left( \frac{1}{s- \rho} + \frac{1}{\rho}\right) - \frac{1}{2}\log N + \log(2\pi) +\frac{1}{2s+k-1}  + \log \left( s + \frac{k-1}{2}\right) + O(1).
%\end{align*}
%Thus, 
we obtain 
\begin{align}\label{RelogL}
&-\Re\left(\frac{L'(s_0,f)}{L(s_0,f)}\right) = \frac{1}{2}\log(N(k^2+t^2))  - \sum_\rho\Re\left(\frac{1}{s_0-\rho}\right)+ O(1).
\end{align}
To bound the left-hand side of \eqref{RelogL}, observe that \[-\frac{L'}{L}(s,f) = \sum_{n\geq 1} \frac{\Lambda_f(n)}{n^s},\]  where $\Lambda_f$ is supported on prime powers and satisfies  the identities 
\[\Lambda_f(p)= \lambda_f(p)\log p\hspace{.3in} \text{ and } \hspace{.3in}
\Lambda_f(p^m)= \sum_{j=1}^2 \alpha_{j, f}(p)^m\log p,
\]
where $\alpha_{j, f}(p)$ is the $p$-th local parameter of $f$ as in \eqref{eqn:L-fn}. 
Therefore, 
\begin{align*}
\left| \frac{L'(s_0, f)}{L(s_0,f)}  \right| %= \left| \sum_{n\geq 1} \frac{\Lambda_f(n)}{n^{1+\gamma + it}}\right| 
\leq  \sum_{n\geq 1} \left|\frac{\Lambda_f(n)}{n^{1+\gamma + it}}\right| \leq \sum_{p^m} \frac{|\alpha_{1, f}(p)^m+\alpha_{2, f}(p)^m|\log p}{p^{m(1+\gamma)} }\leq 2\sum_{p^m} \frac{\log p}{p^{m(1+\gamma)}} =2\sum_{n\geq 1}\frac{\Lambda(n)}{n^{1+\gamma}},
\end{align*}
where $\Lambda(n)$ is the Von Mangoldt function. 
%where the penultimate inequality comes from the formula for $\Lambda_f(p^m)$ and the final inequality from the fact that $|\alpha_{j, f}(p)| \leq 1$ for all $j, p$. Furthermore, letting $\Lambda(n) \vcentcolon= \Lambda_\zeta(n)$ be the coefficients attached to the zeta function,
%\begin{align*}
%\sum_{p^m} \frac{\log p}{p^{m(1+\gamma)}} = \sum_{n\geq 1}\frac{\Lambda(n)}{n^{1+\gamma}}
%\end{align*}
Therefore, we may write
\begin{align}\label{eqn:logderivative_estimate}
-\Re\left(\frac{L'(s_0,f)}{L(s_0,f)}\right) \leq \frac{2}{\gamma} + O(1).
\end{align}
Applying this bound to \eqref{RelogL} and taking the exponential of both sides give
\begin{align*}
(N(k^2+t^2))^{\gamma} \exp\left( - 2\gamma\sum_\rho\Re\left(\frac{1}{s_0-\rho}\right)\right) \ll 1.
\end{align*}
Going back to \eqref{asympL2}, we have
\begin{align*}
\left|\frac{L(s_1,f)}{L(s_0,f)} \right| &\asymp (N(k^2 + |t|^2))^\gamma\prod_\rho \exp \left(-2\gamma\Re\left(\frac{1}{s_0 - \rho}\right)\right) \prod_\rho\exp\left(  \frac{2\gamma^2}{|s_0 - \rho|^2} \right)\\
&\ll \prod_\rho\exp\left(  \frac{2\gamma^2}{|s_0 - \rho|^2} \right).
\end{align*}
Notice that \[|L(s_0,f)|\leq \sum_{n\geq1}\frac{|\lambda_f(n)|}{n^{1+\gamma}}\leq  \sum_{n\geq1}\frac{\tau(n)}{n^{1+\gamma}}=\zeta^2(1+\gamma)=\left(\frac{1}{\gamma}+O(1)\right)^2,\] and therefore we have the desired result 
\begin{align*}
\left|L(s_1,f) \right|&\ll \frac{1}{\gamma^2}\prod_\rho\exp\left(  \frac{2\gamma^2}{|s_0 - \rho|^2} \right).\qedhere
\end{align*}
\end{proof}

%\section{Not-quite-so-pretentious Number Theory}
\section{Key Ingredients from Pretentious Number Theory}\label{sec:pretentious}
This section highlights crucial results regarding the mean values of divisor-bounded multiplicative functions. While the works of Granville--Harper--Soundararajan~\cite{GHS1} and Mangerel~\cite{Mangerel} encompass many of these statements, we require specific variations to suit our setting.

For two multiplicative functions $h,g:\N \to \C$, and $x \in \R_{\geq 0}$, we define the distance $\D(h,g;x)^2$ by \[ \D(h,g;x)^2 = \sum_{p \leq x} \frac{1-Re(h(p)\bar{g}(p))}{p}. \] 
In practice, we will only use this notion for $h$ such that $|h(n)|\leq \tau(n)$ and $g(n) = n^{it}$. The distance function is related to the Dirichlet series $H(s)=\sum_{n\geq 1} h(n)n^{-s}$ as follows. Let $x \geq 3$, and let $h$ be such that $|h(p)| \leq 2$ for all $p$ and $h(n) \ll_\epsilon n^{\epsilon}$ uniformly in $n$. Then \[ H\left(1+\frac{1}{\log x} + it\right) \asymp \log x \exp \left(-\D(h,n^{it};x)^2\right) . \] To see this, we use \cite[Lemma 2.2.15]{MangerelThesis} which states that
\begin{equation} \label{Mangerelasymptotics}
\log H\left(1+\frac{1}{\log x} + it\right) = \sum_{p \leq x} \frac{h(p)p^{-it}}{p} + O(1).
\end{equation} 
Here, the error term is independent of $t$. The logarithm in \eqref{Mangerelasymptotics} is taken with respect to the principal branch, so taking real parts gives 
\begin{equation*}
\log \left| H\left(1+\frac{1}{\log x} + it\right) \right|  = \sum_{p \leq x} \frac{\Re(h(p)p^{-it})}{p} + O(1) = \log \log x - \D(h,n^{it},x)^2 + O(1),
\end{equation*} 
where the last asymptotic follows from Mertens' estimate.

Using this distance function, Granville-Harper-Soundararajan prove the following version of  Hal\'{a}sz's Theorem \cite[Corollary 1.2]{GHS1}. 

\begin{thm}[Hal\'asz's Theorem] \label{Halasz} Let $h$ be a multiplicative function such than $|h(n)| \leq \tau(n)$ for all $n\in \N$, and set $H(s)=\sum_{n\geq 1} h(n)n^{-s}$. Then, for $M = \max_{|t|\leq (\log x)^2} |H(1+\frac{1}{\log x} + it)|$, we have
\[
\frac{1}{x}\sum_{n\leq x} h(n) \ll (M+1)e^{-M} \log x + \frac{(\log\log x)^2}{\log x}.
\]
\end{thm}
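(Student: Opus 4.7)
The plan is to adapt the Plancherel-based proof of the classical Hal\'asz theorem (for $1$-bounded functions) to the divisor-bounded setting; the essential new difficulty is that $|h(n)| \leq \tau(n)$ forces $H(s)$ to grow like $\zeta(s)^2 \sim (s-1)^{-2}$ rather than like $\zeta(s)$, and this is precisely what produces the extra factor of $\log x$ in front of $(M+1)e^{-M}$.

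First I would apply a truncated Perron formula to write
\[
\sum_{n \leq x} h(n) = \frac{1}{2\pi i}\int_{c - iT}^{c+iT} H(s)\frac{x^s}{s}\,ds + O(E),
\]
with $c = 1 + 1/\log x$ and $T = (\log x)^2$, so that the range of $t$ on which one may invoke the hypothesis $|H(c+it)| \leq M$ matches the range that contributes meaningfully to the integral. The Perron tail $E$ is controlled by the crude bound $\sum_{n\leq x}|h(n)| \leq \sum_{n\leq x}\tau(n) \ll x\log x$.

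Next I would split the contour into the short range $|t| \leq 1$, on which the pointwise bound $|H(c+it)|\leq M$ contributes at most $O(M x \log x)$ (giving $M\log x$ after dividing by $x$), and the long range $1 \leq |t| \leq T$, where I would apply Cauchy--Schwarz together with an $L^2$ mean-square bound of the shape
\[
\int_{-T}^{T} |H(c+it)|^2 \,\frac{dt}{1+t^2} \ll \sum_{n \geq 1} \frac{\tau(n)^2}{n^{2c}} \ll (\log x)^3,
\]
obtained via Plancherel and the classical estimate $\sum_{n \leq x}\tau(n)^2 \asymp x(\log x)^3$.

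The main obstacle is extracting the exponential factor $e^{-M}$ rather than the merely polynomial savings in $M$ afforded by Cauchy--Schwarz alone. This requires a Hal\'asz--Montgomery style pigeonhole argument: one identifies a short subinterval on which $|H(c+it)|$ is close to $M$, extracts its contribution using Parseval, and iterates on what remains. In the divisor-bounded setting this is the content of Corollary~1.2 of \cite{GHS1}, carried out using the pretentious distance machinery together with the asymptotic \eqref{Mangerelasymptotics}. The secondary error term $(\log \log x)^2/\log x$ comes from the Mertens-type approximation used to pass from $|H(1 + 1/\log x + it)|$ to $\exp(\log\log x - \D(h,n^{it};x)^2)$, and from the contribution of small primes in $\D(h,n^{it};x)^2$ that cannot be controlled solely in terms of $M$; balancing these against the main term is the delicate part of the argument.
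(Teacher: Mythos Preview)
The paper does not prove this statement at all: it is quoted verbatim as \cite[Corollary~1.2]{GHS1} and used as a black box. So there is no ``paper's own proof'' to compare your proposal against.

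As for the proposal itself, it is a reasonable high-level outline of how Hal\'asz-type results are proved, but it is not a self-contained argument. The Perron setup, the short/long range split, and the $L^2$ bound via Plancherel are all fine, but at the crucial point---extracting the factor $e^{-M}$ rather than a merely polynomial saving---you explicitly write that ``this is the content of Corollary~1.2 of \cite{GHS1}.'' That is precisely the theorem you are trying to prove, so as written the proposal is circular. If the intent was simply to indicate that the result is imported from \cite{GHS1}, then you and the paper are in complete agreement; if the intent was to give an independent proof, the iteration/pigeonhole step that converts the maximum $M$ into exponential decay $e^{-M}$ still needs to be carried out in full, and that is where essentially all of the work lies.
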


Throughout the remainder of this section, we will focus exclusively on the case  of $h(n) = \lambda_f(n)$, where  $f \in H_k(N)$. We will commence by stating a version of the so-called Lipschitz Theorem, based on the work of \cite{GHS1}.
\begin{proposition}[Lipschitz Theorem]\label{prop:lipschitz} 
Suppose $f\in H_k(N)$. Let $\phi$ be a number in the range $|t| \leq (\log  x)^2$ for which the function $t \mapsto |L(1+ \frac{1}{\log x} + it, f)|$ reaches its maximum. Then for all $1 \leq w \leq x^{1/3}$
we have
\begin{equation*}
\left| \frac{1}{x} \sum_{n \leq x} \lambda_{f}(n)n^{-i\phi} - \frac{1}{x/w} \sum_{n \leq x/w} \lambda_{f}(n)n^{-i\phi} \right| \ll \log \left(\frac{\log x}{\log ew}\right) \left(\frac{\log w+(\log\log x)^2}{\log x}\right)^{2-\frac{4}{\pi}}\log x. 
\end{equation*}
\end{proposition}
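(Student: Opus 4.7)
The plan is to derive this statement by applying the divisor-bounded Lipschitz theorem of Granville--Harper--Soundararajan \cite{GHS1} to the twisted multiplicative function $h(n) := \lambda_f(n) n^{-i\phi}$. Since $|\lambda_f(n)| \leq \tau(n)$ by Deligne's bound and $|n^{-i\phi}| = 1$, the function $h$ is multiplicative with $|h(n)| \leq \tau(n)$, and its Dirichlet series is $H(s) = L(s + i\phi, f)$. The GHS Lipschitz theorem in this generality bounds the difference of the mean values of $h$ at $x$ and $x/w$ by an expression of the shape
$$\log\left(\frac{\log x}{\log ew}\right) \left(\frac{\log w + \mathcal{M}^2}{\log x}\right)^{2 - 4/\pi} \log x,$$
where $\mathcal{M}^2 := \min_{|t| \leq (\log x)^2} \D(h, n^{it}; x)^2$; the exponent $2 - 4/\pi$ is the characteristic output of the Hal\'asz--Montgomery argument underlying their proof.

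The second step is to control $\mathcal{M}^2$ using the hypothesis on $\phi$. By construction, $t \mapsto |H(1 + 1/\log x + it)| = |L(1 + 1/\log x + i(\phi + t), f)|$ attains its maximum at $t = 0$ over the range $|t| \leq (\log x)^2 - |\phi|$, since $\phi$ was chosen as the maximizer of $|L(1 + 1/\log x + it, f)|$ on $|t| \leq (\log x)^2$. Combining this with the pretentious identity $|H(1 + 1/\log x + it)| \asymp \log x \cdot \exp(-\D(h, n^{it}; x)^2)$ recorded before Theorem~\ref{Halasz} gives $\mathcal{M}^2 = \D(h, 1; x)^2$, which may be estimated trivially as
$$\D(h, 1; x)^2 = \sum_{p \leq x} \frac{1 - \Re(\lambda_f(p) p^{-i\phi})}{p} \leq 3 \sum_{p \leq x} \frac{1}{p} \ll \log\log x \leq (\log\log x)^2,$$
via Mertens' estimate and the bound $|\lambda_f(p)| \leq 2$. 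Substituting gives the claimed inequality.

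The main obstacle is matching the precise form of the GHS Lipschitz theorem to the bound as stated: in particular, verifying that the exponent $2 - 4/\pi$ emerges unchanged after twisting, and that the combination $\log w + (\log\log x)^2$ appears in exactly this form once the trivial upper bound on $\mathcal{M}^2$ is inserted. A secondary technical point is confirming that the slight shrinkage of the admissible range from $|t| \leq (\log x)^2$ to $|t| \leq (\log x)^2 - |\phi|$ is absorbed without loss, which should follow from $|\phi|$ being negligible inside the logarithmic factors.
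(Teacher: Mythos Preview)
Your approach is correct but proceeds differently from the paper's. The paper does not apply \cite[Theorem~1.5]{GHS1} as a black box: it simply observes that the GHS theorem, as stated, bounds the quantity
\[
\left| \frac{1}{x^{1+i\phi}} \sum_{n \leq x} \lambda_{f}(n) - \frac{1}{(x/w)^{1+i\phi}} \sum_{n \leq x/w} \lambda_{f}(n) \right|
\]
(with $\lambda_f$ untwisted and the twist absorbed into the denominator), and then asserts that re-running the GHS argument with only minor modifications yields the same bound for $\frac{1}{x}\sum\lambda_f(n)n^{-i\phi}-\frac{1}{x/w}\sum\lambda_f(n)n^{-i\phi}$. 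Thus the paper's ``proof'' is a pointer back into the internals of \cite{GHS1}.

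Your route is the black-box alternative: apply GHS directly to $h(n)=\lambda_f(n)n^{-i\phi}$, so that the maximizer $\phi'$ of $|H(1+1/\log x+it)|$ is at $t=0$, and hence the GHS conclusion for $\frac{1}{x^{1+i\phi'}}\sum h$ \emph{is} the statement about $\frac{1}{x}\sum h$. This is cleaner in that it avoids reopening the GHS proof. Two remarks, though. First, your formulation of the GHS bound with $\mathcal{M}^2$ in place of $(\log\log x)^2$ is not the form the paper quotes, and in fact a bound on the \emph{plain} mean values in terms of $\mathcal{M}^2$ alone cannot hold in general (take $h(n)=n^{it_0}$); what GHS controls is the $\phi'$-twisted difference, already with the $(\log\log x)^2$ term. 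So your second step is unnecessary once you know $\phi'=0$: you can quote GHS in the form the paper uses and be done. Second, the range issue you flag is real but harmless: the search interval $|t|\le(\log x)^2$ in GHS is not sharp, and replacing it by $|t|\le 2(\log x)^2$ (or noting $|\phi|\le(\log x)^2$ so the shifted and original intervals overlap on an interval of the same order) costs nothing in the final bound.
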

\begin{proof} The proof of this version differs only in very minor details from the proof of \cite[Theorem~ 1.5]{GHS1} where the authors prove that the same upper bound holds for $
\left| \frac{1}{x^{1+i\phi}} \sum_{n \leq x} \lambda_{f}(n) - \frac{1}{(x/w)^{1+i\phi}} \sum_{n \leq x/w} \lambda_{f}(n) \right|
$. The reader is referred to \cite{GHS1} for the detailed exposition.  \end{proof}
We will apply the Lipschitz bound in Proposition \ref{prop:lipschitz} as follows.
\begin{corollary}\label{cor:lipschitz}
Let $f\in H_k(N)$. Let $\phi$ be a number in the range $|t| \leq (\log  x)^2$ for which the function $t \mapsto |L(1+ \frac{1}{\log x} + it, f)|$ reaches its maximum. Then for all $x^{2/3} \leq z \leq x^{3/2}$% tentative bound, 
	we have
	\begin{equation*}
		\left| \frac{1}{x} \sum_{n \leq x} \lambda_{f}(n)n^{-i\phi} - \frac{1}{z} \sum_{n \leq z} \lambda_{f}(n)n^{-i\phi} \right| \ll \left(\frac{1+ \left|\log\frac{x}{z}\right|}{\log x}\right)^{2-\frac{4}{\pi}+o(1)}\log x. %\text{ \st{$\exp(\sqrt{\log \log x}$}}).
	\end{equation*}

\end{corollary}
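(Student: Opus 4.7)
The plan is to deduce the corollary directly from the Lipschitz bound of Proposition~\ref{prop:lipschitz}, by splitting on whether $z\leq x$ or $z>x$ so that the appropriate dilation parameter exceeds $1$, and absorbing the ensuing polylogarithmic losses into the $+o(1)$ in the exponent.

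For $x^{2/3}\leq z\leq x$, I would set $w=x/z\in[1,x^{1/3}]$ so that $\log w=|\log(x/z)|$, and apply Proposition~\ref{prop:lipschitz} directly. Using $\log(\log x/\log ew)\leq\log\log x$ together with $\log w+(\log\log x)^2\leq(\log\log x)^2(1+\log w)$ (valid once $\log\log x\geq 1$), the proposition's bound reduces to
\[
(\log\log x)^{O(1)}\left(\frac{1+\log w}{\log x}\right)^{2-4/\pi}\log x.
\]
This polylog factor is then absorbed into the $+o(1)$ exponent of the target, with the $o(1)$ taken to be a small negative quantity of the form $-C\log\log\log x/\log\log x$ so that $((1+\log w)/\log x)^{o(1)}$ dominates $(\log\log x)^{O(1)}$ throughout the main ranges of $w$; in the boundary regime where $1+\log w\asymp\log x$ both the Proposition's bound and the target are already $\asymp\log x$, so the inequality is trivial.

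For $x\leq z\leq x^{3/2}$, I would instead apply Proposition~\ref{prop:lipschitz} with $z$ in the role of $x$ and $w=z/x$; the constraint $w\leq z^{1/3}$ translates precisely to $z\leq x^{3/2}$. Since $\log z\asymp\log x$ in this range, the resulting estimate has the same shape as in the previous case (with $\log z$ and $\log x$ interchangeable up to constants), and the same polylog-absorption argument produces the target bound.

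The main obstacle is that Proposition~\ref{prop:lipschitz} singles out $\phi$ as the maximizer of $t\mapsto|L(1+1/\log x+it,f)|$ over $|t|\leq(\log x)^2$, whereas the Case 2 application naively requires the analogous maximizer with base $z$. Since $1/\log z\asymp 1/\log x$ and $|\phi|\leq(\log x)^2\leq(\log z)^2$, the value $|L(1+1/\log z+i\phi,f)|$ is comparable to the true maximum over $|t|\leq(\log z)^2$, so $\phi$ remains an approximate maximizer for $z$; confirming that the Lipschitz estimate is robust under this perturbation --- by tracing through the proof strategy in \cite[Theorem~1.5]{GHS1} and verifying that only the size of $|L(1+1/\log x+i\phi,f)|$ enters the relevant Hal\'asz input --- is the key technical step.
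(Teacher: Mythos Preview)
Your proposal is correct and follows essentially the same approach as the paper: split into $z\le x$ and $z>x$, apply Proposition~\ref{prop:lipschitz} with the appropriate dilation parameter, and absorb the $\log\log x$ and $(\log\log x)^2$ losses into the $o(1)$ in the exponent. The paper handles the second case with a one-line ``repeat the argument, interchanging the roles of $x$ and $z$''; you are right to flag the $\phi$-dependence issue there, but since $\log z\asymp\log x$ the maximizers and the associated Hal\'asz inputs agree up to $O(1)$ factors, so the robustness check you outline is routine rather than a substantive obstacle.
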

\begin{proof}
Let $z = x/w$ in Proposition \ref{prop:lipschitz}. Then if $x^{2/3}\leq z \leq x$, we have 
	\begin{align*}
	\left| \frac{1}{x} \sum_{n \leq x} \lambda_{f}(n)n^{-i\phi} - \frac{1}{z} \sum_{n \leq z} \lambda_{f}(n)n^{-i\phi} \right| &\ll \log \left(\frac{\log  x}{\log\frac{ex}{z}}\right)\left(\frac{\log\frac{ex}{z}+(\log \log  x)^2}{\log x }\right)^{2-\frac{4}{\pi}}\mathcal \log x \\
	&\ll \log \log x \left(\frac{\log\frac{ex}{z}+(\log \log x)^2}{\log x}\right)^{2-\frac{4}{\pi}}\log x\\
	& \ll \left(\frac{1+\left|\log\frac{x}{z}\right|}{\log x } \right)^{2-\frac{4}{\pi} + o(1)} \log x.
	\end{align*}
	For the interval $x \leq z \leq x^{3/2}$, we repeat the argument, interchanging the roles of $x$ and $z$.
\end{proof}

We will additionally state the following analogue of \cite[Corollary 3.9]{Mangerel} for later use.
\begin{lemma}\label{cor3.9-mangerel} 
Let $f\in H_k(N)$ and $\phi$ be as in Proposition \ref{prop:lipschitz}.  Then
\begin{equation*}\label{MangerelLemma}
%\frac{1}{x}\sum_{n\leq x} \lambda_f(n) = \frac{x^{i\phi}}{1+i\phi}\cdot \frac{1}{x}\sum_{n\leq x} \lambda_f(n) n^{-i\phi} + O\left(  \frac{(\log x)}(\log\log x)^{5-\frac{8}{\pi}}}{(\log x)^{2- \frac{4}{\pi}}}\right).
\frac{1}{x}\sum_{n\leq x} \lambda_f(n) = \frac{x^{i\phi}}{1+i\phi}\cdot \frac{1}{x}\sum_{n\leq x} \lambda_f(n) n^{-i\phi} + O\left( (\log x)^{-1+\frac{4}{\pi}}(\log\log x)^{5-\frac{8}{\pi}}\right).
\end{equation*}
\end{lemma}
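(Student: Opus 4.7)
The plan is to use Abel's summation formula to express $T(x) := \sum_{n \leq x} \lambda_f(n)$ in terms of the twisted partial sum $S_\phi(y) := \sum_{n \leq y} \lambda_f(n) n^{-i\phi}$, and then approximate $S_\phi(y)/y$ by $S_\phi(x)/x$ using Proposition~\ref{prop:lipschitz}. Writing $\lambda_f(n) = [\lambda_f(n) n^{-i\phi}] \cdot n^{i\phi}$ and applying partial summation gives
\[
T(x) = x^{i\phi} S_\phi(x) - i\phi \int_1^x S_\phi(y) \, y^{i\phi - 1}\, dy.
\]
Substituting $S_\phi(y) = (y/x)S_\phi(x) + y\, r(y)$ with $r(y) := S_\phi(y)/y - S_\phi(x)/x$ and evaluating $\int_1^x y^{i\phi} dy = (x^{1+i\phi} - 1)/(1+i\phi)$ isolates the main term $\frac{x^{i\phi}}{1+i\phi} \cdot \frac{S_\phi(x)}{x}$, reducing the problem to bounding the residual $\frac{i\phi}{x} \int_1^x r(y)\, y^{i\phi}\, dy$ by the claimed error.

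Next, I would split the integral at $y_0 = x^{2/3}$. On $[1, y_0]$ the trivial divisor bound $|S_\phi(y)| \ll y \log y$ gives $|r(y)| \ll \log x$, and the total contribution is $O(|\phi|(\log x)/x^{1/3})$, which is negligible since $|\phi| \leq (\log x)^2$. On $[y_0, x]$, I would apply Proposition~\ref{prop:lipschitz} with $w = x/y \in [1, x^{1/3}]$: the critical regime, where $w$ is close to $1$ (equivalently $\log w \leq (\log\log x)^2$), yields the pointwise bound $|r(y)| \ll (\log\log x)^{5-8/\pi}(\log x)^{-1+4/\pi}$. The exponent $5 - 8/\pi$ appears precisely because the outer factor $\log(\log x/\log ew) \asymp \log\log x$ multiplies $((\log\log x)^2/\log x)^{2-4/\pi}$ and the trailing $\log x$, producing the target exponents in the final error.

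The main obstacle is the explicit factor of $i\phi$ multiplying the residual integral: simply bounding $|r(y)|$ pointwise and integrating the absolute value loses a factor of $|\phi|$ that is not compensated, giving an estimate larger than claimed by $|\phi|$ (potentially as large as $(\log x)^2$). To eliminate this factor one must exploit oscillation in $y^{i\phi}$, using the elementary estimate $|\int_I y^{i\phi}\, dy| \ll |I|/(1+|\phi|)$ on any sub-interval $I$. Decomposing $[y_0, x]$ into dyadic blocks $I_j = [x/2^{j+1}, x/2^j]$ and writing $r(y) = r(x/2^j) + [r(y) - r(x/2^j)]$, the ``constant'' piece $|r(x/2^j)| \cdot |\int_{I_j} y^{i\phi} dy| \ll A_j (x/2^j)/(1+|\phi|)$ sums geometrically (with $A_j$ the Lipschitz bound at $w = 2^j$, uniformly of the critical size for $j \leq (\log\log x)^2$ and decaying thereafter thanks to the $2^{-j}$ factor) to contribute the correct order. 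The ``within-block'' remainder $\int_{I_j}[r(y) - r(x/2^j)] y^{i\phi} dy$ is handled by a further integration by parts against the antiderivative $y^{1+i\phi}/(1+i\phi)$; the jump discontinuities of $r$ at integers contribute telescoping terms in the sum $\sum_n h(n)/n$ that must be tracked explicitly, and this is the most delicate step of the argument. After combining both contributions, the factor of $i\phi$ cancels against the oscillation gain and the claimed error $O((\log x)^{-1+4/\pi}(\log\log x)^{5-8/\pi})$ emerges.
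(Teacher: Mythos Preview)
Your overall plan (partial summation followed by the Lipschitz bound) is right, and you have correctly diagnosed the obstacle: with your direction of partial summation, the residual $\frac{i\phi}{x}\int_1^x r(y)\,y^{i\phi}\,dy$ carries an uncompensated factor of $|\phi|$ if bounded crudely. But your proposed repair is where the argument breaks down. The ``further integration by parts against $y^{1+i\phi}/(1+i\phi)$'' does not give new information: carrying it out produces, from the jump part of $dr$, precisely $\frac{1}{1+i\phi}\sum_{n\in I_j}\lambda_f(n)$, and summing these over blocks telescopes back to (essentially) $\frac{1}{1+i\phi}T(x)$. You have thus reintroduced the very quantity you are trying to estimate, and the resulting identity is tautologically equivalent to your starting formula $T(x)=x^{i\phi}S_\phi(x)-i\phi\int S_\phi(y)y^{i\phi-1}\,dy$. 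No bound is gained. Nor does the ``within-block'' Lipschitz estimate help: for $y\in I_j$ the bound on $|r(y)-r(x/2^j)|$ is no better than the global $E=(\log\log x)^{5-8/\pi}(\log x)^{-1+4/\pi}$, because the $(\log\log x)^2$ term in Proposition~\ref{prop:lipschitz} dominates for small $w$, so integrating absolutely still leaves $|\phi|E$.

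The fix is much simpler than anything involving oscillation: reverse the direction of partial summation. Write
\[
\frac{1}{x}\sum_{n\le x}\lambda_f(n)n^{-i\phi}
=\frac{1}{x^{1+i\phi}}\sum_{n\le x}\lambda_f(n)+\frac{i\phi}{x}\int_1^x \frac{1}{u^{1+i\phi}}\sum_{n\le u}\lambda_f(n)\,du,
\]
then approximate $u^{-1-i\phi}\sum_{n\le u}\lambda_f(n)$ by $x^{-1-i\phi}\sum_{n\le x}\lambda_f(n)$ via the Lipschitz bound (the version comparing $x^{-1-i\phi}\sum_{n\le x}\lambda_f(n)$, which is what \cite[Theorem~1.5]{GHS1} actually states). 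Crude absolute-value estimation of the error now gives
\[
\frac{1}{x}\sum_{n\le x}\lambda_f(n)n^{-i\phi}=\frac{1+i\phi}{x^{1+i\phi}}\sum_{n\le x}\lambda_f(n)+O\bigl(|\phi|\,E\bigr),
\]
and the extra $|\phi|$ is harmless: multiplying through by $x^{i\phi}/(1+i\phi)$ to isolate $\frac{1}{x}\sum_{n\le x}\lambda_f(n)$ divides the error by $|1+i\phi|$, cancelling it. This is exactly the route taken in the paper; the whole argument fits in a few lines with no dyadic decomposition and no oscillation.
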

\begin{proof}
We will show the equivalent statement that  
%\[ \frac1x\sum_{n \leq x}\lambda_f(n)n^{-i\phi} = \frac{1+i\phi}{x^{1+i\phi}}\sum_{n \leq x}\lambda_f(n) + O\left( |\phi| \frac{(\log x)(\log\log x)^{5-\frac{8}{\pi}}}{(\log x)^{2- \frac{4}{\pi}}}\right). \]
\[ \frac1x\sum_{n \leq x}\lambda_f(n)n^{-i\phi} = \frac{1+i\phi}{x^{1+i\phi}}\sum_{n \leq x}\lambda_f(n) + O\left( |\phi| (\log x)^{-1+\frac{4}{\pi}}(\log\log x)^{5-\frac{8}{\pi}}\right). \]
By partial summation, we have
\begin{align*}
\frac1x\sum_{n \leq x}\lambda_f(n)n^{-i\phi} &= \frac1x\int_1^x u^{-i\phi} d\left\{ \sum_{n\leq x} \lambda_f(n)\right\}= \frac{1}{x^{1+i\phi}}\sum_{n\leq x} \lambda_f(n) + \frac{i\phi}{x}\int_1^x \frac{1}{u^{1+i\phi}}\sum_{n\leq u}\lambda_f(n)du.
\end{align*}
We split the integral into two pieces as follows:
\begin{align*}
\int_1^x \frac{1}{u^{1+i\phi}}\sum_{n\leq u}\lambda_f(n)du &= \int_1^{x/(\log x)^2} \frac{1}{u^{1+i\phi}}\sum_{n\leq u}\lambda_f(n)du + \int_{x/(\log x)^2}^x \frac{1}{u^{1+i\phi}}\sum_{n\leq u}\lambda_f(n)du.
\end{align*}
For the first integral, we use the trivial bound, so
\begin{align*}
\frac{i\phi}{x}\int_1^{x/(\log x)^2} \frac{1}{u^{1+i\phi}}\sum_{n\leq u}\lambda_f(n)du &\ll \frac{|\phi|}{x}\int_1^{x/(\log x)^2} \frac{1}{u}\sum_{n\leq u}|\lambda_f(n)|du \leq \frac{|\phi|}{x}\int_1^{x/(\log x)^2} \log u \, du 
 \leq \frac{|\phi|}{\log x}.
\end{align*}
Since $w = x/u$ is in the range of Proposition ~\ref{prop:lipschitz}, the second integral is equal to 
\begin{align*}
%\frac{i\phi}{x}\int_{x/(\log x)^2}^x \frac{1}{u^{1+i\phi}}\sum_{n\leq u}\lambda_f(n)du \leq 
&\frac{i\phi}{x}  \int_{x/(\log x)^2}^x  \left(  \frac{1}{x^{1+i\phi}}\sum_{n\leq x}\lambda_f(n) + O\left( \log \left(\frac{\log x}{\log ew}\right) \left(\frac{\log w+(\log\log x)^2}{\log x}\right)^{2-\frac{4}{\pi}}\log x  \right)\right)du\\
 = &\frac{i\phi}{x}\left( \frac{1}{x^{1+i\phi}}\sum_{n\leq x}\lambda_f(n)  \right)\int_{x/(\log x)^2}^x du +  \frac{i\phi}{x}\cdot O\left( \log\log x \left(\frac{(\log\log x)^2}{\log x}\right)^{2-\frac{4}{\pi}}\log x\right)\int_{x/(\log x)^2}^x du\\
= & \frac{i\phi}{x^{1+i\phi}}\sum_{n\leq x}\lambda_f(n)  + O\left(|\phi| \frac{(\log\log x)^{5-\frac{8}{\pi}}}{(\log x)^{2-\frac{4}{\pi}}}\log x \right).
\end{align*}
Combining the two integrals gives the desired result. 
\end{proof}
We conclude with a result that will play a pivotal role in the next section. 
\begin{proposition}\label{mvfbound}
Let $f\in H_k(N)$. Let $\phi \in [-(\log x)^2, (\log x)^2]$ be the point at which the maximal value  $M = \max_{|t|\leq (\log x)^2} |L(1+\frac{1}{\log x} + it, f)|$ is attained. Then,
\[
\frac{1}{x}\sum_{\n\leq x} \lambda_f(n) \ll \log x \left(\frac{(M+1)e^{-M}}{1+|\phi|} + \frac{(\log\log x)^{5-\frac{8}{\pi}}}{(\log x)^{2-\frac{4}{\pi}}}\right).
\]
\end{proposition}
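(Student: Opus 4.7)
The plan is to relate the untwisted sum $\frac{1}{x}\sum_{n\leq x}\lambda_f(n)$ to its twist by $n^{-i\phi}$ via Lemma~\ref{cor3.9-mangerel}, and then bound the twisted sum by applying Hal\'{a}sz's Theorem (Theorem~\ref{Halasz}) to the divisor-bounded multiplicative function $h(n) = \lambda_f(n)n^{-i\phi}$. The factor $1/(1+i\phi)$ produced by Lemma~\ref{cor3.9-mangerel} is exactly what yields the denominator $1+|\phi|$ in the claimed estimate.

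Concretely, first I would take absolute values in Lemma~\ref{cor3.9-mangerel} and use $|1+i\phi| = \sqrt{1+\phi^2}\asymp 1+|\phi|$ to obtain
\[
\left|\frac{1}{x}\sum_{n\leq x}\lambda_f(n)\right| \ll \frac{1}{1+|\phi|}\left|\frac{1}{x}\sum_{n\leq x}\lambda_f(n)n^{-i\phi}\right| + (\log x)^{-1+\frac{4}{\pi}}(\log\log x)^{5-\frac{8}{\pi}}.
\]
The error term here is precisely $\log x$ times the second summand inside the parentheses of the target bound, so it remains only to control the twisted partial sum by $(M+1)e^{-M}\log x$ plus negligible error.

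Next, I would apply Hal\'{a}sz's theorem to $h(n) = \lambda_f(n)n^{-i\phi}$, which satisfies $|h(n)| \leq \tau(n)$ by Deligne's bound. The associated Dirichlet series is $H(s) = L(s+i\phi, f)$, and the relevant quantity in Theorem~\ref{Halasz} is
\[
M_h := \max_{|t|\leq (\log x)^2}\left|L\left(1+\tfrac{1}{\log x}+i(t+\phi), f\right)\right|.
\]
Since $\phi$ was chosen as the maximizer of $|L(1+\tfrac{1}{\log x}+iu, f)|$ on $|u|\leq (\log x)^2$, setting $t=0$ in $M_h$ recovers $M$, whence $M_h \geq M$. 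Because the map $y\mapsto (y+1)e^{-y}$ has derivative $-ye^{-y}\leq 0$ for $y\geq 0$, it is decreasing, so $(M_h+1)e^{-M_h} \leq (M+1)e^{-M}$, and Hal\'{a}sz's theorem yields
\[
\left|\frac{1}{x}\sum_{n\leq x}\lambda_f(n)n^{-i\phi}\right|\ll (M+1)e^{-M}\log x + \frac{(\log\log x)^2}{\log x}.
\]

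Substituting this into the first inequality produces the main term $\frac{(M+1)e^{-M}\log x}{1+|\phi|}$ of the desired bound. The residual Hal\'{a}sz error $(\log\log x)^2/\log x$ (divided by $1+|\phi|\geq 1$) is dominated by the Lipschitz-type error $(\log x)^{-1+\frac{4}{\pi}}(\log\log x)^{5-\frac{8}{\pi}}$ inherited from Lemma~\ref{cor3.9-mangerel}, since $-1+\tfrac{4}{\pi}>-1$ and $5-\tfrac{8}{\pi}>2$. The mildly subtle step will be justifying the passage from $M_h$ to $M$ in the Hal\'{a}sz bound; this is handled cleanly by the monotonicity of $(y+1)e^{-y}$ once one notices that the shifted maximum is at least $M$, and no further quantitative control of $L$ on the shifted window is needed.
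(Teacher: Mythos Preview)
Your proposal is correct and follows essentially the same approach as the paper: apply Hal\'asz's Theorem to $h(n)=\lambda_f(n)n^{-i\phi}$ and combine with Lemma~\ref{cor3.9-mangerel}, absorbing the Hal\'asz error $(\log\log x)^2/\log x$ into the larger Lipschitz error. If anything, your argument is more careful: the paper writes the Hal\'asz bound directly with $M$, whereas you explicitly note that the shifted maximum $M_h$ satisfies $M_h\geq M$ and invoke the monotonicity of $y\mapsto (y+1)e^{-y}$ to pass from $M_h$ to $M$.
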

\begin{proof}
%We start with the bound on $\mathcal{P}_f(x)$. %Consider the sum $\sum_{p\leq x} \frac{|\lambda_f(p)|}{p}$. 
%Applying Cauchy-Schwartz inequality and Lemma~\ref{PNTafms}, we have
%\begin{align*}
%\sum_{p\leq x} \frac{|\lambda_f(p)|}{p} &\leq \left( \sum_{p\leq x} \frac{|\lambda_f(p)|^2}{p}\right)^{1/2}\left( \sum_{p\leq x} \frac{1}{p}\right)^{1/2}
%\leq \left( \sum_{p\leq x} \frac{1}{p} + O(1)\right)^{1/2} \left( \sum_{p\leq x} \frac{1}{p}\right)^{1/2}.
%\end{align*}
%Then, multiplying through, we obtain
%\begin{align}\label{fpO}
%\sum_{p\leq x} \frac{|\lambda_f(p)|}{p} &\leq \sum_{p\leq x} \frac{1}{p} + O(\sqrt{\log\log x}).
%\end{align}
%Now, %starting with $\mathcal{P}_f(x)$, 
%we perform the following manipulation,
%\begin{align*}
%\mathcal{P}_f(x) &= \prod_{p\leq x}\left(1 + \frac{|\lambda_f(p)| - 1}{p}\right)
%=\exp \left( \sum_{p\leq x}\log \left(1 + \frac{|\lambda_f(p)| - 1}{p}\right) \right).
%\end{align*}
%Since $\log(1+x) \leq x$ for small values of $x$, and $|\lambda_f(p)|\leq 2$, so $(|\lambda_f(p)|-1)/p \leq \frac{1}{2}$, we have
%\begin{align*}
%\mathcal{P}_f(x) &\leq \exp \left( \sum_{p\leq x} \frac{|\lambda_f(p)| - 1}{p} \right) %= \exp \left( \sum_{p\leq x} \left(\frac{|\lambda_f(p)|}{p}  - \frac{1}{p}\right) \right) \\ &
%\leq \exp \left( O(\sqrt{\log\log x})   \right) 
%\end{align*}
%where the last inequality follows from \eqref{fpO}. Therefore, $\mathcal{P}_f(x) \ll \exp\left( \sqrt{\log\log x}\right)$.

Using Theorem~\ref{Halasz}, we have
\[
\frac{1}{x}\sum_{n\leq x} \lambda_f(n)n^{-i\phi} \ll (M+1)e^{-M} \log x + \frac{(\log\log x)^2}{\log x}.
\]
Applying Lemma~\ref{cor3.9-mangerel}, we get
\begin{align*}
\frac{1}{x}\sum_{n\leq x} \lambda_f(n) &\ll  \left|\frac{x^{i\phi}}{1 + i\phi}\right|\left((M+1)e^{-M} \log x + \frac{(\log\log x)^2}{\log x}\right) +  O\left( \frac{(\log\log x)^{5-\frac{8}{\pi}}}{(\log x)^{1- \frac{4}{\pi}}}\right)\\
%\intertext{Then, since taking absolute value can only increase the bound,}
&\ll  \frac{(M+1)e^{-M}}{1 + |\phi|}  \log x + O\left(\frac{(\log\log x)^2}{\log x}\right) +  O\left(\frac{(\log\log x)^{5-\frac{8}{\pi}}}{(\log x)^{1- \frac{4}{\pi}}}\right).
\end{align*}
The first error term is smaller than the second, %(since it has a much larger denominator), 
so it is subsumed into the second one. %Then, since $\mathcal{P}_f(x) \ll \exp\left( \sqrt{\log\log x}\right)$, the second $O$ term becomes
%\begin{align*}
%O\left( \exp\left( \sqrt{\log\log x}\right)\frac{(\log\log x)^{3-\frac{4}{\pi}}}{(\log x)^{2- \frac{4}{\pi}}}\right),
%\end{align*}
%which completes the proof.
\end{proof}

\section{Necessary Lemmas}\label{sec:lemmas}

In this section, we establish the key ingredients required for the proof of the main theorem; namely Lemmas~\ref{lem:estimate average}~and~\ref{lem:L-average}, and Proposition~\ref{prop}. Across all the statements, we assume $f\in H_k(N)$.

\begin{lemma} \label{lem:estimate average}
Let $y_0 \geq 4$ and assume that $|S(e^{y_0}, f)| \geq y_0e^{y_0}y_0^{-1/100}$. 
There exists a real number $\phi = \phi(y_0)$ with $|\phi| \ll y_0e^{y_0}/|S(e^{y_0},f)|$ such that, for any $y\in \R$, %$|y-y_0|\leq  \frac{y_0}{3}$, 
\begin{equation*}\label{eqn:cor-lipschitz1} 
%\left| \frac{S(e^y, f, \phi)}{e^y} - \frac{S(e^{y_0}, f, \phi)}{e^{y_0}}  \right| \ll \left( \frac{\log y_0+ |y-y_0|}{y_0} \right)^{2-\frac{4}{\pi}+o(1)}\exp(\sqrt{\log y_0}).
\left| \frac{S(e^y, f, \phi)}{e^y} - \frac{S(e^{y_0}, f, \phi)}{e^{y_0}}  \right| \ll \left( \frac{\log y_0+ |y-y_0|}{y_0} \right)^{2-\frac{4}{\pi}+o(1)}\max\{y, y_0\}.
\end{equation*}
Moreover, for any $\epsilon >0$, we have
 \begin{equation}\label{eqn:cor-lipschitz2}
	S(e^{y_0}, f, \phi) = (1+i\phi)e^{-i\phi y_0}S(e^{y_0}, f)+ O(e^{y_0}y_0^{-1+\frac{4}{\pi}+\epsilon}).
\end{equation}
\end{lemma}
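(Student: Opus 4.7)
The plan is, with $x=e^{y_0}$, to take $\phi$ to be a maximizer of $t\mapsto |L(1+1/\log x+it, f)|$ on $|t|\leq (\log x)^2=y_0^2$. This is precisely the choice of $\phi$ demanded by Corollary~\ref{cor:lipschitz}, Proposition~\ref{mvfbound}, and Lemma~\ref{cor3.9-mangerel}, which together supply every ingredient needed.

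To extract the bound on $|\phi|$, I would apply Proposition~\ref{mvfbound} at $x=e^{y_0}$, obtaining
\[
\frac{|S(e^{y_0}, f)|}{e^{y_0}} \ll y_0\cdot\frac{(M+1)e^{-M}}{1+|\phi|} + \frac{(\log y_0)^{5-8/\pi}}{y_0^{1-4/\pi}}.
\]
The secondary term is $O(y_0^{4/\pi-1+o(1)})$, and since $4/\pi-1<1-1/100$, this is strictly dominated by the hypothesized lower bound $|S(e^{y_0},f)|/e^{y_0}\geq y_0^{1-1/100}$. Hence the first term must carry the mass, and using the elementary bound $(M+1)e^{-M}\ll 1$ one rearranges to get $1+|\phi|\ll y_0 e^{y_0}/|S(e^{y_0},f)|$. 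Feeding the hypothesis back in further gives $1+|\phi|\ll y_0^{1/100}$, which will be essential in the second half.

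For the first displayed Lipschitz-type estimate, I would split into two cases. When $\tfrac{2y_0}{3}\leq y\leq \tfrac{3y_0}{2}$, apply Corollary~\ref{cor:lipschitz} directly with $x=e^{y_0}$ and $z=e^y$, so that $|\log(x/z)|=|y-y_0|$ and $\log x=y_0$; the resulting bound $((1+|y-y_0|)/y_0)^{2-4/\pi+o(1)}y_0$ is dominated by the claimed expression, since $\log y_0+|y-y_0|\geq 1+|y-y_0|$ for $y_0\geq 4$ and $\max\{y,y_0\}\geq y_0$. When $|y-y_0|>y_0/3$, the claimed right-hand side is already $\gg \max\{y,y_0\}$, so the trivial bound $|S(e^y, f, \phi)|/e^y\leq e^{-y}\sum_{n\leq e^y}\tau(n)\ll y+1$, obtained from Deligne's inequality $|\lambda_f(n)|\leq \tau(n)$ and the classical divisor-sum asymptotic, suffices.

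Finally, for \eqref{eqn:cor-lipschitz2}, I would apply Lemma~\ref{cor3.9-mangerel} at $x=e^{y_0}$ and rearrange algebraically to isolate $S(e^{y_0},f,\phi)$. Multiplying through by $e^{y_0}$ produces an error of the form $O(e^{y_0}|\phi|y_0^{-1+4/\pi}(\log y_0)^{5-8/\pi})$, and inserting $|\phi|\ll y_0^{1/100}$ collapses this into $O(e^{y_0}y_0^{-1+4/\pi+\epsilon})$ for any $\epsilon>1/100$ (with the logarithmic factor and the $1/100$ absorbed into $\epsilon$). The main obstacle throughout is the juggling of these competing error terms: the entire argument hinges on the numerical fact $4/\pi<2-1/100$, without which one could not absorb the secondary error in Proposition~\ref{mvfbound} to extract the bound on $|\phi|$, and consequently could not control the error in~\eqref{eqn:cor-lipschitz2}.
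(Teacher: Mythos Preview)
Your proposal is correct and follows essentially the same route as the paper: choose $\phi$ as the maximizer on $|t|\le y_0^2$, use Proposition~\ref{mvfbound} to extract the bound $1+|\phi|\ll y_0e^{y_0}/|S(e^{y_0},f)|$, invoke Corollary~\ref{cor:lipschitz} for the Lipschitz estimate, and Lemma~\ref{cor3.9-mangerel} for \eqref{eqn:cor-lipschitz2}. Your treatment is in fact more careful than the paper's terse version in two places: you explicitly dispose of the range $|y-y_0|>y_0/3$ (where Corollary~\ref{cor:lipschitz} does not apply) via the trivial divisor bound, and you track the hidden factor $1+|\phi|\ll y_0^{1/100}$ when rearranging Lemma~\ref{cor3.9-mangerel}, correctly observing that \eqref{eqn:cor-lipschitz2} as stated really only follows for $\epsilon>1/100$.
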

\begin{proof}
	We set $x=e^{y_0}$ in Proposition~\ref{mvfbound}. There exists $\phi \in [-(y_0)^2, (y_0)^2]$ such that
\begin{align}\label{eqn:mvfbound}
\frac{1}{e^{y_0}}\sum_{n\leq e^{y_0}} \lambda_f(n) \ll \frac{(1+M)e^{-M} y_0}{1+|\phi|} + y_0^{-1+\frac{4}{\pi}}(\log y_0)^{5-\frac{8}{\pi}}.
\end{align}
We first show that $1+|\phi|\ll y_0e^{y_0}/|S(e^{y_0},f)|$. Starting from \eqref{eqn:mvfbound}, we get
%\[ \frac1{e^{y_0}}|S(e^{y_0}, f)| \ll \frac{(1+M)e^{-M}}{1+|\phi|}y_0 + y_0^{-1+\frac{4}{\pi}+\epsilon}, \] 
%which we write explicitly to get 
\[  \frac{|S(e^{y_0}, f)|}{y_0e^{y_0}} \leq C\left( \frac{(1+M)e^{-M}}{1+|\phi|} + \frac{1}{y_0^{2-\frac{4}{\pi}-\epsilon}} \right) , \] for some absolute positive constant $C$.
Rearranging the above equation yields
\[ 1+|\phi| \leq \frac{C(M+1)e^{-M}}{\frac{|S(e^{y_0}, f)| }{y_0e^{y_0}} -C y_0^{-2+\frac{4}{\pi}+\epsilon}} \leq \frac{2C(M+1)e^{-M}}{\frac{|S(e^{y_0}, f)| }{y_0e^{y_0}}} \] 
where the last inequality holds as long as 
\[ \frac{|S(e^{y_0}, f)| }{y_0e^{y_0}} \geq 2C y_0^{-2+\frac{4}{\pi}+\epsilon},  \] 
which happens for all $y_0 \gg_C 1$ since $\frac{|S(e^{y_0}, f)| }{y_0e^{y_0}} \geq  y_0^{-1/100}$. 
Hence,
\[1+|\phi|\ll  \frac{y_0e^{y_0}}{|S(e^{y_0}, f)|}.\]
The first assertion follows from Corollary~\ref{cor:lipschitz} with $x=e^{y_0}$ and $z=e^{y}$. The second assertion follows from Lemma~\ref{cor3.9-mangerel} by taking  $x=e^{y_0}$. %and observing that $\exp(\sqrt{\log y_0})\ll y_0^{\epsilon}$ for any $\epsilon>0$.
\end{proof}

\begin{lemma}\label{lem:L-average}%Our version of Lemma 3.2
Let $\phi$ be a real number, $T$ a positive real number, and $\gamma$ a real number such that $0\leq \gamma\leq\frac{1}{2}$. %Consider the multiplicative function $\lambda_\phi$ defined by $\lambda_\phi\vcentcolon=\lambda_f(n)n^{-i\phi}$. 
Set $S(x, f, \phi) \vcentcolon=\sum_{n\leq x}\lambda_f(n)n^{-i\phi}$. Then
\begin{align*}
\sqrt{2\pi T}\int_{-\infty}^\infty \frac{S(e^y,f,\phi)}{e^y}\exp\left(\gamma y - \frac{T}{2}y^2\right)dy = \int_{-\infty}^\infty \frac{L(1-\gamma + i\phi + i\xi,f)}{1-\gamma+i\xi} \exp \left(-\frac{\xi^2}{2T}\right)d\xi.
\end{align*}
\end{lemma}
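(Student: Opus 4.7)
The plan is to view both sides of the claimed identity as evaluations of a single contour integral along different vertical lines, linked by Cauchy's theorem with a Gaussian weight. For $c>0$, I set
$$H(c) := \frac{1}{i}\int_{c-i\infty}^{c+i\infty}\frac{L(s+i\phi,f)}{s}\,e^{(s-1+\gamma)^2/(2T)}\,ds.$$
The right-hand side of the lemma equals $H(1-\gamma)$ after the change of variables $s=1-\gamma+i\xi$, which yields $ds=i\,d\xi$ and $(s-1+\gamma)^2=-\xi^2$. So it suffices to prove $H(c)=\text{LHS}$ for some $c>1$ together with contour-independence of $H(c)$ for $c\in(0,\infty)$.

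Contour-independence is an application of Cauchy's theorem: the integrand is meromorphic with only a simple pole at $s=0$, and since $0\leq\gamma\leq 1/2$ implies $1-\gamma\geq 1/2>0$, shifting between $c=1-\gamma$ and any $c>1$ crosses no singularity. The horizontal segments of the rectangular contour vanish in the limit because $|e^{(s-1+\gamma)^2/(2T)}|\asymp e^{-\Im(s)^2/(2T)}$ provides Gaussian decay in $\Im(s)$ that dominates the polynomial growth of $L(s+i\phi,f)$ on vertical strips.

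For $c>1$ I would use the Laplace-type representation, valid absolutely for $\Re(s)>1$ by Abel summation applied to the Dirichlet series of $L(s+i\phi, f)$,
$$\frac{L(s+i\phi,f)}{s}=\int_0^\infty S(e^y,f,\phi)\,e^{-sy}\,dy,$$
substitute it into $H(c)$, and interchange the order of integration. Fubini is justified because $|S(e^y,f,\phi)|e^{-cy}\ll y e^{(1-c)y}$ is integrable for $c>1$, while the remaining $s$-integrand retains Gaussian decay in $\Im(s)$. The inner $s$-integral is then a complex Gaussian; completing the square in $s$ (and freely shifting the contour horizontally since the new integrand is entire) evaluates it to $\sqrt{2\pi T}\,e^{-Ty^2/2-(1-\gamma)y}$. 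Assembling the pieces gives
$$H(c)=\sqrt{2\pi T}\int_0^\infty S(e^y,f,\phi)\,e^{(\gamma-1)y-Ty^2/2}\,dy,$$
which equals the left-hand side after noting that $S(e^y,f,\phi)=0$ for $y<0$.

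The main technical hurdle will be the Fubini step: the combined integrand is not absolutely integrable along either slice alone, so the argument must deploy the Gaussian decay in $\Im(s)$ and the exponential decay $e^{-cy}$ with $c>1$ simultaneously. Once this absolute integrability is secured, the explicit complex-Gaussian evaluation and the contour-independence lemma finish the proof mechanically.
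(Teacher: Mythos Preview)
Your argument is correct, but it takes a different route from the paper's proof. The paper simply invokes Plancherel's formula: setting $g(y)=\frac{S(e^y,f,\phi)}{e^y}e^{\gamma y}$ and $h(y)=e^{-Ty^2/2}$, it computes $\widehat g(\xi)=\frac{L(1-\gamma+i\phi+i\xi,f)}{1-\gamma+i\xi}$ and $\widehat h(\xi)=\sqrt{2\pi/T}\,e^{-\xi^2/(2T)}$, and reads off the identity from $\int g\overline h=\frac{1}{2\pi}\int \widehat g\,\overline{\widehat h}$. Your approach instead realizes the right-hand side as a vertical-line integral $H(1-\gamma)$, shifts the contour to $\Re(s)=c>1$, inserts the Laplace/Mellin representation of $L(s+i\phi,f)/s$, and evaluates a complex Gaussian.

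What each buys: the paper's route is a two-line computation, but it tacitly requires $g\in L^2(\mathbb R)$ and that the Fourier transform of $g$ is given by the Dirichlet-series expression on the line $\Re(s)=1-\gamma\leq 1$ where that series diverges; this is justified by a nontrivial bound such as $S(x,f,\phi)\ll_f x^{1/2}$ together with $L^2$-Fourier theory. Your route is longer but more self-contained: it works entirely with absolutely convergent objects (the Dirichlet series on $\Re(s)>1$, the entire $L$-function, and the Gaussian weight), so only the trivial estimate $|S(e^y,f,\phi)|\ll ye^y$ is needed. In effect you are proving, by hand, the specific instance of Plancherel the paper quotes.

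One small simplification for your Fubini step: on the line $\Re(s)=c$ the modulus of the double integrand factors as $|S(e^y,f,\phi)|e^{-cy}\cdot e^{((c-1+\gamma)^2-t^2)/(2T)}$, so absolute integrability over $(y,t)\in(0,\infty)\times\mathbb R$ is immediate and your worry that the integrand ``is not absolutely integrable along either slice alone'' is unfounded---each single integral is already finite.
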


\begin{proof}
We apply Plancherel's formula with $g(y)=\frac{S(e^y,f,\phi)}{e^y}\exp(\gamma y )$ and  $h(y)=\exp\left(-\frac{T}{2}y^2\right)$.
The result follows by computing
\begin{align*}
\widehat{g}(\xi) &= \int_{-\infty}^\infty \frac{S(e^y,f,\phi)}{e^y}\exp(\gamma y -i\xi y)dy\\
%&= \int_{-\infty}^\infty \left(\sum_{n\leq e^y} \lambda_f(n) n^{-i\phi} \right) e^{y(\gamma - 1-i\xi)} dy\\
&= \sum_{n\leq 1} \lambda_f(n) n^{-i\phi} \int_{\log n}^\infty e^{y(\gamma - 1-i\xi)} dy\\
&= \sum_{n\leq 1} \lambda_f(n) n^{-i\phi}  \frac{n^{\gamma-1-i\xi}}{1-\gamma+i\xi} 
=\frac{L(1-\gamma +i\phi + i\xi,f)}{1 - \gamma + i\xi},
\end{align*}
and
\begin{align*} 
\widehat{h}(\xi)=\int_{-\infty}^\infty \exp\left(-\frac{T}{2}y^2 - iy\xi\right)dy = \sqrt{\frac{2\pi}{T}}\exp \left( - \frac{\xi^2}{2T}\right).
\end{align*}
%The result then follows from the Plancherel formula.
\end{proof}

The next proposition combines the previous two lemmas to derive a lower bound for a certain sum over non-trivial zeros of $L(s,f)$ under the assumption that $S(x,f)$ is large. 

\begin{proposition}\label{prop}
Let $y_0$ be large with $|S(e^{y_0}, f)|=\vcentcolon y_0e^{y_0}/Q\geq y_0e^{y_0}y_0^{-1/100}$, and let $\phi$ be as in Lemma~\ref{lem:estimate average}. If $CQ^3/y_0\leq\gamma\leq 2/5$ for a suitably large constant $C$, then there exists $|\xi|\leq 2\sqrt{\gamma\log (k^\gamma\log k)/y_0}$ such that
\[ \sum_\rho \frac{\gamma}{|1+\gamma+i(\phi+\xi)-\rho|^2}\geq \frac{y_0}{4}.\]
\end{proposition}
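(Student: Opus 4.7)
My approach applies Plancherel's identity from Lemma~\ref{lem:L-average} with the choice $T=\gamma/y_0$. Completing the square in $y$ gives $\exp(\gamma y - Ty^2/2) = \exp(\gamma y_0/2)\exp(-T(y-y_0)^2/2)$, so the Gaussian on the $y$-side is centered at $y_0$ with standard deviation $\sqrt{y_0/\gamma}$. The plan is to extract a lower bound on the $y$-integral from the hypothesis that $|S(e^{y_0},f)|$ is large, match it against an upper bound on the $\xi$-integral supplied by Lemma~\ref{lem:L-upperbound}, and conclude that the zero sum $G(\xi) := \sum_\rho 2\gamma^2/|1+\gamma+i(\phi+\xi)-\rho|^2$ must be at least $\gamma y_0/2$ somewhere in $|\xi|\leq \Xi := 2\sqrt{\gamma\log(k^\gamma\log k)/y_0}$.

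For the LHS, I would use the first part of Lemma~\ref{lem:estimate average} to replace $S(e^y,f,\phi)/e^y$ by $S(e^{y_0},f,\phi)/e^{y_0}$ inside the effective window, and equation~\eqref{eqn:cor-lipschitz2} (together with $|1+i\phi|\asymp 1+|\phi|$) to get $|S(e^{y_0},f,\phi)|/e^{y_0}\gg (1+|\phi|)y_0/Q$. Computing the Gaussian integral yields
\[ \text{LHS}\gg (1+|\phi|)\frac{y_0}{Q}\exp(\gamma y_0/2). \]
Controlling the Lipschitz error against this main term reduces, after the routine cancellations, to $(\gamma y_0)^{1-2/\pi}\gg Q$, which holds under $\gamma y_0 \geq CQ^3$ since $3(1-2/\pi)-1 = 2-6/\pi > 0$.

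For the RHS, Lemma~\ref{lem:L-upperbound} gives $|L(1-\gamma+i(\phi+\xi),f)|\ll \gamma^{-2}\exp(G(\xi))$. I split the integral at $|\xi|=\Xi$. On the inner range $|\xi|\leq\Xi$, I bound the integrand by $\gamma^{-2}\exp(G_{\max,\Xi})|1-\gamma+i\xi|^{-1}$ and use $\int_{|\xi|\leq\Xi}\exp(-\xi^2/(2T))\,d\xi\leq \sqrt{2\pi T}$, giving a total bound of order $\gamma^{-2}\sqrt{T}\exp(G_{\max,\Xi})$; retaining the Gaussian here (rather than a cruder $\log\Xi$ bound) is what makes the comparison eventually work. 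On the outer range, the identity $\Xi^2/(2T) = 2\log(k^\gamma\log k)$ gives $\exp(-\Xi^2/(2T)) = k^{-2\gamma}(\log k)^{-2}$, which exactly cancels the worst-case growth $G(\xi)\leq 2\gamma\log k + O(1)$ derived from~\eqref{RelogL}--\eqref{eqn:logderivative_estimate}, leaving only an $O((\log k)^{-2})$ contribution that is negligible.

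Combining the two sides and taking logarithms yields
\[ G_{\max,\Xi} \geq \frac{\gamma y_0}{2} + \log\!\left(\frac{(1+|\phi|)(\gamma y_0)^{3/2}}{Q}\right) - O(1). \]
Under $\gamma y_0 \geq CQ^3$, the logarithm is at least $\tfrac{3}{2}\log C + \tfrac{7}{2}\log Q + \log(1+|\phi|)$, which is non-negative for $C$ sufficiently large. Selecting the point $\xi^*\in[-\Xi,\Xi]$ where $G$ attains its maximum then gives $\sum_\rho \gamma/|1+\gamma+i(\phi+\xi^*)-\rho|^2 = G(\xi^*)/(2\gamma) \geq y_0/4$, as required. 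The most delicate step will be the outer-range estimate: the precise form of $\Xi$ has to balance $\exp(-\Xi^2/(2T))$ against the worst-case growth of $|L|$ in the $\xi$-integral, and this in turn relies on the sharp leading constant $2\gamma$ in the uniform bound on $G(\xi)$ coming from the Hadamard product.
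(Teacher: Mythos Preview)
Your argument is correct and follows the same overall architecture as the paper: set $T=\gamma/y_0$, use the Lipschitz estimate of Lemma~\ref{lem:estimate average} together with \eqref{eqn:cor-lipschitz2} to bound the $y$-integral from below by $\asymp (1+|\phi|)\,(y_0/Q)\exp(\gamma y_0/2)$, and then confront this with the $\xi$-integral from Lemma~\ref{lem:L-average} via Lemma~\ref{lem:L-upperbound}. Your verification that the Lipschitz error is dominated precisely when $(\gamma y_0)^{1-2/\pi}\gg Q$, and hence under $\gamma y_0\ge CQ^3$, is the same computation the paper performs (with the exponent rounded to $2/3$ there).

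The one genuine point of departure is the treatment of the outer range $|\xi|>\Xi$. The paper does \emph{not} reuse Lemma~\ref{lem:L-upperbound} there; instead it bounds $|L(1-\gamma+i(\phi+\xi),f)|$ directly by partial summation against $S(x,f)\ll \min(x\log x,(xk)^{1/2}\log(xk)^{1/2})$, obtaining a convexity-type estimate $\ll k^\gamma(\log k)/\gamma$, and then shows that $\frac{|L|}{|1-\gamma+i\xi|}\exp(-\xi^2/(4T))$ cannot attain the required maximum for $|\xi|>\Xi$. Your route---feeding Lemma~\ref{lem:L-upperbound} with the uniform bound $G(\xi)\le \gamma\log(N(k^2+(\phi+\xi)^2))+O(1)$ extracted from \eqref{RelogL}--\eqref{eqn:logderivative_estimate}---is an equally valid alternative that stays entirely within the Hadamard-product framework; note only that the bound $G(\xi)\le 2\gamma\log k+O(1)$ you quote should really read $2\gamma\log(k+|\phi+\xi|)+O(1)$, though the Gaussian tail absorbs the extra polynomial growth with room to spare. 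The paper's approach has the advantage of being self-contained (it does not invoke the zero sum in the region where it is discarded), while yours is slightly more economical in that it recycles the same upper bound throughout.
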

\begin{proof}
Set $T=\gamma/y_0$, and note that $T\leq 1$. %Observing
%\begin{equation*}
%\left| \frac{S(e^y, f, \phi)}{e^y} - \frac{S(e^{y_0}, f, \phi)}{e^{y_0}}  \right| \ll \left( \frac{\log y_0+ |y-y_0|}{y_0} \right)^{2-\frac{4}{\pi}+o(1)}\max(y,y_0)
%\end{equation*}
%for any $y\in \R$ and applying \eqref{eqn:cor-lipschitz2}, we have that
Applying Lemma~\ref{lem:estimate average}, we have that
\begin{align*}
&\sqrt{2\pi T}\int_{-\infty}^\infty \frac{S(e^y, f, \phi)}{e^y}\exp\left(\gamma y - \frac{T}{2}y^2\right)dy \\
&\hspace{.2in}= \sqrt{2\pi T}\exp\left(\frac{\gamma y_0}{2}\right)\int_{-\infty}^\infty \left(\frac{S(e^{y_0}, f, \phi)}{e^{y_0}}+O\left(\frac{\log y_0+|y-y_0|^{2/3}}{y_0^{2/3}} y_0 \right)  \right)\exp\left(-\frac{T}{2}(y-y_0)^2\right)dy \\
%&\hspace{.2in}= \sqrt{2\pi T}\exp\left(\frac{\gamma y_0}{2}\right)\int_{-\infty}^\infty \left(\frac{S(e^{y_0}, f, \phi)}{e^{y_0}}+O\left((1+|y-y_0|^{2/3})y_0^{1/3} \right)  \right)\exp\left(-\frac{T}{2}(y-y_0)^2\right)dy \\
& \hspace{.2in}= \sqrt{2\pi T}\exp\left(\frac{\gamma y_0}{2}\right)\left(\frac{(1+i\phi)S(e^{y_0}, f)}{e^{y_0(1+i\phi)}}+O\left(y_0^{1/3}\right) \right) \int_{-\infty}^\infty \exp\left(-\frac{T}{2}(y-y_0)^2\right)dy \\
& \hspace{.9in} +O\left(y_0^{1/3}\sqrt{2\pi T}\exp\left(\frac{\gamma y_0}{2}\right)\int_{-\infty}^{\infty}\left((\log y_0+|y-y_0|^{2/3}) \right)\exp\left(-\frac{T}{2}(y-y_0)^2\right)dy\right).
\end{align*}
Noting that
\[  \int_{-\infty}^\infty \exp\left(-\frac{T}{2}(y-y_0)^2\right)dy=\sqrt{\frac{2\pi}{T}}\]
and 
\[\int_{-\infty}^{\infty}\left((\log y_0+|y-y_0|^{2/3}) \right)\exp\left(-\frac{T}{2}(y-y_0)^2\right)dy=\log y_0\sqrt{\frac{2\pi}{T}}+\left(\frac{2}{T}\right)^{5/6}\Gamma\left(\frac56\right),\]
the integral above equals
\begin{align*}
&2\pi \exp\left(\frac{\gamma y_0}{2}\right)\left(\frac{(1+i\phi)S(e^{y_0}, f)}{e^{y_0(1+i\phi)}}+O\left(y_0^{1/3}+ y_0^{1/3}\log y_0+\left(\frac{y_0}{T}\right)^{1/3}\right) \right) \\
=&2\pi \exp\left(\frac{\gamma y_0}{2}\right)\left(\frac{(1+i\phi)S(e^{y_0}, f)}{e^{y_0(1+i\phi)}}+O\left(\frac{y_0^2}{\gamma}\right)^{\frac13}\right). 
\end{align*}
Using our lower bound on $\gamma$, we see that this integral is in magnitude $\geq \pi y_0 \exp\left(\frac{\gamma y_0}{2}\right)/Q$. So it follows from our assumption on~$S(e^{y_0},f)$ and Lemma~\ref{lem:L-average} that
\begin{align*}
\frac{ \pi y_0}{Q} \exp\left(\frac{\gamma y_0}{2}\right) &\leq \int_\R\frac{|L(1-\gamma+i(\phi+\xi), f)|}{|1-\gamma+i\xi|}\exp\left(-\frac{\xi^2}{2T}\right)\, d\xi \\
 &\leq \left(\max_{\xi\in \R}\frac{|L(1-\gamma+i(\phi+\xi), f)|}{|1-\gamma+i\xi|}\exp\left(-\frac{\xi^2}{4T}\right)\right)\int_\R\exp\left(-\frac{\xi^2}{4T}\right)\, d\xi.
 \end{align*}
 Thus,
 \begin{equation}\label{eqn:L-ratio}
 \max_{\xi\in \R}\frac{|L(1-\gamma+i(\phi+\xi), f)|}{|1-\gamma+i\xi|}\exp\left(-\frac{\xi^2}{4T}\right) \geq \frac{\pi y_0 \exp\left(\frac{\gamma y_0}{2}\right)}{Q}\frac{1}{2\sqrt{T\pi}}
 =\frac{y_0\exp\left(\frac{\gamma y_0}{2}\right)}{2Q}\sqrt{\frac{\pi y_0}{\gamma}}.
 \end{equation}
 If $\Re(s)=\sigma>1/2$, we have that
 \begin{align*}
 |L(s, f)|&\leq \left|s\int_1^\infty \frac{S(x, f)}{x^{s+1}}\, dx\right|\leq |s|\int_1^\infty \frac{\min(x\log x, (xk)^{1/2}\log (xk)^{1/2})}{x^{\sigma+1}}\, dx \\
 &=\frac{|s|}{k^{\sigma-1}}\left(\frac{k^\sigma}{(\sigma-1)^2}-\frac{\log k}{\sigma-1}-\frac{1}{(\sigma-1)^2}+\frac{2\log k}{2\sigma-1}+\frac{2}{(2\sigma-1)^2}\right),
 \end{align*}
where we recall that $k$ is the weight of $f$. It follows that, if $|\xi|>2\sqrt{\gamma\log (k^\gamma\log k)/y_0}$, we have 
\begin{align*}
&\frac{|L(1-\gamma+i(\phi+\xi), f)|}{|1-\gamma+i\xi |}\exp\left(-\frac{\xi^2}{4T}\right)\\
&\hspace{.7in} \leq \left|\frac{1-\gamma+i(\phi+\xi)}{1-\gamma+i\xi}\right| k^\gamma\left(\frac{k^{-\gamma}}{\gamma^2}+\frac{\log k}{\gamma}-\frac{1}{\gamma^2}+\frac{2\log k}{1-2\gamma}+\frac{2}{(1-2\gamma)^2}\right)\exp\left(-\frac{\xi^2}{4T}\right) \\
&\hspace{.7in}= \left|\frac{1-\gamma+i(\phi+\xi)}{1-\gamma+i\xi}\right|\left(\frac{k^\gamma\log k}{\gamma}+\frac{2k^\gamma\log k}{1-2\gamma}-\frac{k^\gamma-1}{\gamma^2}+\frac{2k^\gamma}{(1-2\gamma)^2}\right)\exp\left(-\frac{\xi^2}{4T}\right) \\
%&\hspace{.7in}\leq  \left|\frac{1-\gamma+i(\phi+\xi)}{1-\gamma+i\xi}\right|\left(\frac{k^\gamma\log k}{\gamma}+\frac{2k^\gamma\log k}{1-2\gamma}+\frac{2k^\gamma}{(1-2\gamma)^2}\right)\exp\left(-\frac{\xi^2}{4T}\right)\\
&\hspace{.7in}\leq  \left|\frac{1-\gamma+i(\phi+\xi)}{1-\gamma+i\xi}\right|\left(\frac{k^\gamma\log k}{\gamma}+\frac{3k^\gamma\log k}{1-2\gamma}\right)\frac{k^{-\gamma}}{\log k}\\
&\hspace{.7in}\leq \frac{7(1+2|\phi|)}{\gamma}.  
\end{align*}
Since $CQ^3/y_0\leq \gamma$, the right-hand side of (\ref{eqn:L-ratio}) is greater than 
\[\frac{\pi^{1/2}C^{\frac32}Q^{\frac{7}{2}}\exp(\frac{CQ^3}{2})}{\gamma^2}\]
which is larger than $(7+14Q)/\gamma$ with a suitably large $C$. Therefore, the maximum on the left-hand side of \eqref{eqn:L-ratio} cannot be attained  in this range of $\xi$. Thus, there exists $\xi$ with $|\xi|\leq 2\sqrt{\gamma\log (k^\gamma\log k)/y_0}$ such that
\begin{equation*}
 |L(1-\gamma+i(\phi+\xi), f)|\geq \frac{y_0}{2Q}\exp\left(\frac{\gamma y_0}{2}\right)\sqrt{\frac{\pi y_0}{\gamma}} |1-\gamma+i\xi|\exp\left(\frac{\xi^2}{4T}\right)
 \geq \frac{ y_0}{2Q}\exp\left(\frac{\gamma y_0}{2}\right)\sqrt{\frac{\pi y_0}{\gamma}}.
 \end{equation*}
By utilizing this bound in conjunction with Lemma~\ref{lem:L-upperbound}, we obtain 
%\begin{align*}\left|L ( 1-\gamma + i(\phi+\xi),f) \right| \ll \frac{1}{\gamma}\exp\left(\sum_\rho \frac{2\gamma^2}{|1+\gamma + i(\phi+\xi) - \rho|^2} \right).
%\end{align*}
%Hence,
\begin{align*}
 \frac{1}{\gamma^2} \exp\left(\sum_\rho \frac{2\gamma^2}{|1+\gamma+ i(\phi+\xi) - \rho|^2} \right)\gg  \frac{ y_0}{2Q}\exp\left(\frac{\gamma y_0}{2}\right)\sqrt{\frac{\pi y_0}{\gamma}}.
\end{align*}
Consequently, 
\begin{align*}
\exp\left(2\gamma\left(\sum_\rho \frac{\gamma}{|1+\gamma + i(\phi+\xi) - \rho|^2} - \frac{y_0}{4}\right)\right)&\gg \frac{\gamma^2y_0}{2Q}\sqrt{\frac{\pi y_0}{\gamma}}\geq \frac12\sqrt{\pi}C^{\frac32}Q^{\frac72},
\end{align*}
since $\gamma y_0\geq CQ^3$. Choosing $C$ large enough ensures the right-hand side is $\geq 1$. 
%\begin{align*}
%\exp\left(2\gamma\left(\sum_\rho \frac{\gamma}{|1+\gamma + i(\phi+\xi) - \rho|^2} - \frac{y_0}{4}\right)\right)&\geq 1.
%\end{align*}
Hence,
 \[ \sum_\rho\frac{\gamma}{|1+\gamma+i(\phi+\xi)-\rho|^2}-\frac{y_0}{4}\geq0,\]
 as desired. 
\end{proof}

\section{Proof of Theorem \ref{thm:main}}\label{sec:proof}

This section is devoted to the proof of Theorem~\ref{thm:main}. Let $\phi, \gamma$, and $\xi$ be as given in Proposition~\ref{prop}. Let $\dis Y=\sqrt{\frac{\log(k^\gamma\log k)}{y_0}}$, and suppose that $|1+i\phi-\rho|\geq 100Y^2$. Then,
\begin{align*}
|1+\gamma+i(\phi+\xi)-\rho|&\geq \left|1+50Y^2+i\phi-\rho\right|-\left(50Y^2+|\xi|\right) \\
%&\geq \left|1+50Y^2+i\phi-\rho\right|-\left(50Y^2+2\gamma^{1/2}Y\right) \\
&\geq \left|1+50Y^2+i\phi-\rho\right|-\left(50Y^2+2Y\right) \\
&\geq \frac12\left|1+50Y^2+i\phi-\rho\right|.
\end{align*}
Therefore, applying \eqref{RelogL} and \eqref{eqn:logderivative_estimate}, along with the bound $|\phi|\ll Q\leq y_0^{1/100}$, we have
\begin{align}
\sum_{\substack{\rho \\ |1+i\phi-\rho|>100Y^2}}\frac{\gamma}{ |1+\gamma+i(\phi+\xi)-\rho|^2} &\leq \frac{4\gamma}{50Y^2}\sum_{\substack{\rho \\ |1+i\phi-\rho|>100Y^2}}\frac{1}{\left|1+50Y^2+i\phi-\rho\right|}\nonumber\\
&\leq \frac{2\gamma}{25Y^2}\sum_{\substack{\rho \\ |1+i\phi-\rho|>100Y^2}}\Re\left(\frac{1}{1+50Y^2+i\phi-\rho}\right)\nonumber\\
&\leq \frac{2\gamma}{25Y^2}\left(\frac12\log (k^2+y_0^{1/50})+\frac{1}{100Y^2}+O(1)\right)  \nonumber\\
&\leq \frac{2 y_0}{25\log k}\left(\frac12\log (k^2+y_0^{1/50})+\frac{y_0}{100\gamma\log k}+O(1)\right).\nonumber
\end{align}
Letting $y_0=\log x$, with $\sqrt{\log k}\leq \log x \leq \frac12\log k$,  we get
\begin{align}\label{eqn:upperbd}
\sum_{\substack{\rho \\ |1+i\phi-\rho|>100Y^2}}\frac{\gamma}{ |1+\gamma+i(\phi+\xi)-\rho|^2} &\leq \frac{2\log x}{25\log k}\left(\frac12\log (2k^2)+\frac{(\log x)^2}{100CQ^3\log k}+O(1)\right)\nonumber\\
&\leq \frac{2 \log x}{25\log k}\left(\log k+\frac{\log x}{200CQ^3}+O(1)\right)\nonumber\\
& \leq \frac{2}{25}\left(\log x+\frac{\log x}{400CQ^3}+O(1)\right)\nonumber\\
& \leq \frac{2}{25}\left(\log x+A\log x \right)\leq \frac{9}{100}\log x,
\end{align}
where the last inequality follows from taking $A$ small enough, which is possible by choosing $C$ sufficiently large. Using Proposition \ref{prop} and \eqref{eqn:upperbd} gives
\begin{align}\label{eqn:zero_sum}
\sum_{\substack{\rho \\ |1+i\phi-\rho|\leq 100Y^2}}\frac{\gamma}{ |1+\gamma+i(\phi+\xi)-\rho|^2}&=\sum_{\rho}\frac{\gamma}{ |1+\gamma+i(\phi+\xi)-\rho|^2}-\sum_{\substack{\rho \\ |1+i\phi-\rho|> 100Y^2}}\frac{\gamma}{ |1+\gamma+i(\phi+\xi)-\rho|^2}\\
&\geq \frac{\log x}{4}-\frac{9}{100}\log x=\frac{4}{25}\log x. \nonumber
\end{align}
%\begin{equation}\label{eqn:zero_sum}
%\sum_{\substack{\rho \\ |1+i\phi-\rho|\leq 100\gamma\log(k(\log k)^{1/\gamma})/\log x}}\frac{\gamma}{ |1+\gamma+i(\phi+\xi)-\rho|^2}\geq \frac{4}{25}\log x.
%\end{equation}
Since 
\[ \frac{\gamma}{|1+\gamma+i(\phi+\xi)-\rho|^2}\leq \frac{1}{\gamma},\]
the left-hand side of \eqref{eqn:zero_sum} is at most $1/\gamma \cdot \#\{\rho\,:\, |1+i\phi-\rho|<100Y^2\}$. 
Recalling that 
\[ Y^2=\frac{\log(k^\gamma\log k)}{\log x}=\frac{\gamma\log(k(\log k)^{1/\gamma})}{\log x},\]
we conclude that
\[ \#\left\{\rho\,:\, |1+i\phi-\rho|<\frac{100\gamma\log(k(\log k)^{1/\gamma})}{\log x}\right\}\geq \frac{4\gamma\log x}{25}.\]
The proof of the theorem is completed by setting $L\vcentcolon=100\gamma\log x$.

\section*{Acknowledgements}
This project originated from Women~In~Numbers~6 Research Workshop that took place at Banff International Research Station in March 2023. The authors express their utmost gratitude to the organizers for the invaluable opportunity provided by the workshop.

\bibliographystyle{siam}
\bibliography{references}

\end{document}